\newcommand{\CC}{\mathbf{C}}
\newcommand{\HH}{\mathbf{H}}
\newcommand{\QQ}{\mathbf{Q}}
\newcommand{\RR}{\mathbf{R}}
\newcommand{\ZZ}{\mathbf{Z}}
\newcommand{\FF}{\mathbf{F}}
\newcommand{\DD}{\mathbf{D}}
\newcommand{\cO}{\mathcal{O}}
\newcommand{\cL}{\mathcal{L}}
\newcommand{\pp}{\mathfrak{p}}
\newcommand{\cX}{\mathcal{X}}
\newcommand{\sF}{\mathscr{F}}
\newcommand{\dR}{\mathrm{dR}}
\newcommand{\DdR}{\DD_{\dR}}
\newcommand{\Zp}{\ZZ_p}
\newcommand{\Qp}{\QQ_p}
\newcommand{\into}{\hookrightarrow}
\newcommand{\et}{\text{\textup{\'et}}}
\newcommand{\plec}{\mathrm{plec}}
\newcommand{\mul}{\mathrm{mul}}
\DeclareMathOperator{\GL}{GL}
\DeclareMathOperator{\ord}{ord}
\DeclareMathOperator{\Gal}{Gal}
\DeclareMathOperator{\Gr}{Gr}
\DeclareMathOperator{\Fil}{Fil}
\newcommand{\BGG}{\mathcal{B}^\bullet_\mu}
\theoremstyle{plain}
    \newtheorem{theorem}{Theorem}[section]
    \newtheorem{lemma}[theorem]{Lemma}
    \newtheorem{proposition}[theorem]{Proposition}
    \newtheorem{corollary}[theorem]{Corollary}
\theoremstyle{definition}
    \newtheorem{definition}[theorem]{Definition}
    \newtheorem{notation}[theorem]{Notation}
    \newtheorem{conjecture}[theorem]{Conjecture}
\theoremstyle{remark}
    \newtheorem{remark}[theorem]{Remark}
\renewcommand{\ge}{\geqslant}
\renewcommand{\le}{\leqslant}
\newcommand{\stbt}[4]{\left(\begin{smallmatrix}#1 & #2 \\ #3 & #4\end{smallmatrix}\right)}
\begin{document}
\renewcommand{\urladdrname}{\itshape ORCID}

\title{Plectic structures in $p$-adic de Rham cohomology}
\author{David Loeffler}
\address[Loeffler]{Warwick Mathematics Institute, University of Warwick, Coventry CV4 7AL, UK}
\curraddr{UniDistance Suisse, Schinerstrasse 18, 3900 Brig, Switzerland}
\email{d.a.loeffler@warwick.ac.uk / david.loeffler@unidistance.ch}
\urladdr{\href{http://orcid.org/0000-0001-9069-1877}{0000-0001-9069-1877}}

\author{Sarah Livia Zerbes}
\address[Zerbes]{Department of Mathematics, ETH Z\"urich, R\"amistrasse 101, 8092 Z\"urich, Switzerland}
\email{sarah.zerbes@math.ethz.ch}
\urladdr{\href{http://orcid.org/0000-0001-8650-9622}{0000-0001-8650-9622}}

\thanks{D.L. gratefully acknowledges the support of the European Research Council through the Horizon 2020 Excellent Science programme (Consolidator Grant ``Shimura varieties and the BSD conjecture'', grant ID 101001051)}
\dedicatory{In memory of Jan Nekov\'a\v{r}}

\begin{abstract}
 Given a Hilbert modular form for a totally real field $F$, and a prime $p$ split completely in $F$, the $f$-eigenspace in $p$-adic de Rham cohomology has a family of partial filtrations and partial Frobenius maps, indexed by the primes of $F$ above $p$. The general plectic conjectures of Nekov\'a\v{r} and Scholl suggest a ``plectic comparison isomorphism'' comparing these structures to \'etale cohomology. We prove this conjecture in the case $[F : \QQ] = 2$ under some mild assumptions; and for general $F$ we prove a weaker statement which is strong evidence for the conjecture, showing that the plectic Hodge filtration has a canonical splitting given by intersecting with simultaneous eigenspaces for the partial Frobenii.
\end{abstract}

\maketitle

\section{Setup}

 Let $F$ be a totally real field of degree $d$, and $Y$ the Hilbert modular variety for $F$ of level $U_1(\mathfrak{N}) = \{ g \in \GL_2(\widehat{\cO}_F): g = \stbt{\star}{\star}{0}{1} \bmod \mathfrak{N}\}$. Fix a numbering of the embeddings $F \into \RR$ as $\sigma_1, \dots, \sigma_d$. Let $f$ be a newform of level $\mathfrak{N}$ and some weight $(\underline{k} + 2, \underline{t})$, where $\underline{k} = (k_1, \dots, k_d) \in \ZZ_{\ge 0}^d$, and $\underline{t} \in \ZZ^d$ such that $w = k_i + 2t_i$ is independent of $i$.

 We choose a prime $p$ which splits completely in $F$ and such that $(p, \mathfrak{N}) = 1$, and an isomorphism $\overline{\QQ}_p \cong \CC$, so we can denote the primes above $p$ by $\pp_1,  \dots, \pp_d$ with $\pp_i$ corresponding to $\sigma_i$. Finally, we also fix a finite extension $L$ of $\Qp$ containing the images of the Hecke eigenvalues of $f$.

 \subsection{The spaces \texorpdfstring{$D_p(f)$ and $V_p(f)$}{Dp(f) and Vp(f)}}

  We are interested in the $2^d$-dimensional de Rham cohomology eigenspace
  \[ D_p(f) \coloneqq H^d_{\dR}\left(Y_L, \cL_{\mu, \dR}\right)\{f\}, \]
  where $\cL_{\mu, \dR}$ is the vector bundle with connection determined by the weight $\mu = (\underline{k}, \underline{t})$, and $\{f\}$ denotes the $f$-generalised eigenspace for the Hecke operators away from $p\mathfrak{N}$. This is an $L$-vector space of dimension $2^d$, equipped with a Hodge filtration $\Fil^\bullet$, and (via comparison with crystalline cohomology) an $L$-linear Frobenius $\varphi$.

  We also have a representation of the Galois group $\Gamma_{\QQ} = \Gal(\overline{\QQ}/\QQ)$ given by
  \[ V_p(f) \coloneqq H^{d}_{\et}\left(Y_{\overline{\QQ}}, \cL_{\mu, \et}\right)\{f\}, \]
  where $\cL_{\mu, \et}$ is the \'etale local system of $L$-vector spaces corresponding to $\mu$. (We shall sometimes write just $D_p$ or $V_p$ for $D_p(f)$ or $V_p(f)$.) The Faltings--Tsuji comparison theorem of $p$-adic Hodge theory gives a canonical isomorphism of filtered $\varphi$-modules
  \begin{equation} \label{eq:faltings} D_p(f) \cong \DdR\left(V_p(f) |_{\Gamma_{\Qp}}\right). \end{equation}

 \subsection{Plectic structures}\label{sect:plecticstr}

  The ``plectic conjectures'' of \cite{NS-plectic1} predict that Shimura varieties for groups arising by restriction of scalars from a totally real field $F$, such as our $Y$, should carry canonical extra structures reflecting the arithmetic of $F$. Our goal here is to investigate how the isomorphism \eqref{eq:faltings} interacts with certain of these additional structures, as we now describe.

  \subsubsection*{Partial filtrations}

   The space $D_p(f)$ is endowed with a family of $d$ distinct (decreasing, $\ZZ$-indexed) filtrations $\Fil_i^\bullet D_p$, for $i = 1, \dots, d$, which we shall call \emph{partial filtrations}. Each filtration $\Fil_i^\bullet$ has two graded pieces in degrees $(t_i, t_i + k_i + 1)$; and the associated total filtration $\Fil^n$ defined by $\Fil^n D_p(f) = \sum_{\substack{(n_1, \dots, n_d) \in \ZZ^d \\ n_1 + \dots + n_d = n}} \Fil^{n_i}_i D_p(f)$ is the usual Hodge filtration.

    Roughly, the nontrivial subspace in $\Fil_i^\bullet$ corresponds to the part of the cohomology generated by differential forms which are holomorphic at the $i$-th infinite place. The construction is explained in \cite{nekovarscholl16} for $\underline{k} = (0, \dots, 0)$; the extension to general coefficients is routine, but we review it in \cref{sect:BGGcplx} below to fix notations.

  \subsubsection*{Partial Frobenii} We also have $d$ commuting linear maps $\varphi_1, \dots, \varphi_d$ on $D_p(f)$, the \emph{partial Frobenii}, whose composite is the usual Frobenius $\varphi$. These arise from endomorphisms of the special fibre $Y_0$, sending a Hilbert--Blumenthal abelian variety $A$ to the quotient of $A$ by the $\pp_i$-torsion in the kernel of Frobenius on $A$. We refer to  \cite[\S 4.6]{tianxiao16} for a detailed account of this construction.

 \subsection{Tensor induction}

  Attached to $f$, we also have a 2-dimensional standard Galois representation $V_p^{\mathrm{std}}(f)$ of $\Gamma_F \coloneqq \Gal(\overline{F} / F)$. Via results of \cite{brylinskilabesse84} and \cite{nekovar-semisimplicity}, we may choose an isomorphism of $\Gamma_\QQ$-representations
  \begin{equation} \label{eq:defpsi}
   \psi : V_p(f) \cong (\bigotimes-\mathrm{Ind})(V_p^{\mathrm{std}}(f)).
  \end{equation}
  This will often, but not always, be unique up to scalars.

  From $\psi$ we obtain an isomorphism of filtered $\varphi$-modules
  \[ \psi_p: D_p(f) \cong \bigotimes_{i=1}^d D_{\pp_i}(f) \]
  where $D_{\pp_i}(f) = \DdR\left(V_p^{\mathrm{std}} |_{\Gamma_{F_{\pp_i}}}\right)$ denotes the filtered $\varphi$-module of the standard representation at the prime $\pp_i$ above $p$.

  The nontrivial graded pieces of $D_{\pp_i}(f)$ are in degrees $t_i$ and $t_i + k_i + 1$; and the ``partial Eichler--Shimura'' congruence relation proved in \cite[Appendix A]{nekovar-semisimplicity} shows that we have $(\varphi_i - \alpha_i)(\varphi_i - \beta_i) = 0$ on $D_p(f)$, for each $i$, where $\alpha_i$, $\beta_i$ are the roots of the Hecke polynomial at $\pp_i$. The roots of this polynomial are also the eigenvalues of $\varphi$ on $D_{\pp_i}(f)$. These facts strongly suggest the following conjecture:

  \begin{conjecture}[Plectic comparison conjecture]
   \label{main}
   For some choice of global isomorphism $\psi$ as above, and each $i = 1, \dots, d$, the isomorphism $\psi_p$ intertwines the partial Frobenius $\varphi_i$ on $D_p(f)$ with the operator $1 \otimes \dots \otimes 1 \otimes \varphi \otimes 1 \otimes \dots \otimes 1$ (with $\varphi$ in the $i$-th component) on $\bigotimes_i D_{\pp_i}$; and similarly intertwines the $i$-th partial filtration $\Fil^\bullet_i$ on $D_p$ with the filtration $D_{\pp_1} \otimes \dots \otimes D_{\pp_{i-1}} \otimes (\Fil^\bullet D_{\pp_i}) \otimes D_{\pp_{i + 1}} \otimes \dots \otimes D_{\pp_d}$.
  \end{conjecture}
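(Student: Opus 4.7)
My overall plan is to exploit the joint eigenspace decomposition of $D_p(f)$ under the commuting partial Frobenii, which is a canonical structure that should match the tensor product side. I would work first in the generic case where $\alpha_i \neq \beta_i$ for each $i$, leaving critical cases for a separate analysis by specialization. In this generic setting, the partial Eichler--Shimura relation $(\varphi_i - \alpha_i)(\varphi_i - \beta_i) = 0$ makes each $\varphi_i$ semisimple, and since the $\varphi_i$ commute we get a canonical decomposition
\[ D_p(f) = \bigoplus_{\underline{\epsilon} \in \prod_i \{\alpha_i, \beta_i\}} D_p^{\underline{\epsilon}}. \]
A dimension count (each $\varphi_i$ should have $\alpha_i$-- and $\beta_i$--eigenspaces of equal dimension $2^{d-1}$) forces each summand to be a line. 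On the tensor product side, $\bigotimes_i D_{\pp_i}$ has the analogous $2^d$-term decomposition into lines indexed by the same set, and the whole problem becomes: show that $\psi_p$ carries one decomposition to the other, in a way compatible with the individual partial filtrations.

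Next I would try to match partial Frobenii. Since $\psi_p$ already respects the total Frobenius $\varphi = \prod_i \varphi_i$, the product $\prod_i \epsilon_i$ must be preserved, but this does not pin down the joint labelling -- different $\underline{\epsilon}$ can give the same product. The extra input would come from the geometric construction of $\varphi_i$ in \cite[\S 4.6]{tianxiao16} via the isogeny killing $\pp_i$-torsion inside the kernel of Frobenius on the universal abelian variety. I would trace this through at the level of crystalline cohomology and match it against the action of a geometric Frobenius at $\pp_i$ on $V_p^{\mathrm{std}}$, for which the Eichler--Shimura congruence gives a Hecke-operator description. Once this local identification is established, the multiplicativity of $\bigotimes$-induction transports $\varphi_i$ to the $i$-th tensor factor Frobenius on the right-hand side.

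For the partial filtrations, I would use the BGG complex reviewed in \cref{sect:BGGcplx}: $\Fil^\bullet_i$ corresponds to the subcomplex of classes represented by forms holomorphic at the $i$-th infinite place, and this factorizes with respect to the product decomposition of Hodge theory for Hilbert modular varieties. On the tensor product side, $\Fil^\bullet D_{\pp_i}$ is the holomorphic line in the $i$-th factor by definition. Given the partial Frobenius matching from the previous step, the joint eigenspace of type $(\alpha_1,\dots,\alpha_d)$ provides a canonical ``unit root'' splitting of the total Hodge filtration which in fact splits each partial filtration simultaneously; this simultaneous splitting plus the matching of graded pieces (by weight reasons, using $(t_i, t_i + k_i + 1)$) should force the filtration identification on the nose.

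The hard part will be the non-uniqueness of $\psi$ in \eqref{eq:defpsi}: the tensor-induced representation may have non-scalar automorphisms permuting or mixing the factors, and only the existence of \emph{some} $\psi$ making all the partial structures align is asserted. For general $d$, ruling out non-trivial mixing seems to require a multiplicity-one statement for joint $(\varphi_1,\dots,\varphi_d)$-eigenspaces together with a rigidity input on the outer automorphisms of $\bigotimes$-Ind, which is why only the weaker splitting statement appears achievable. In the case $d = 2$, the automorphism group reduces essentially to the swap of the two factors, and a genericity assumption on the Hecke eigenvalues (e.g.\ $\alpha_1\beta_1 \neq \alpha_2\beta_2$, or an analogous non-CM condition) should pin down the correct $\psi$ uniquely, yielding the full conjecture under mild hypotheses.
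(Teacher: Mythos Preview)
The statement you are addressing is a \emph{conjecture}, and the paper does not prove it in general; it proves the weaker splitting statement (\cref{thm:main}) for arbitrary $d$, and the full conjecture only for $d=2$ under the hypotheses of \cref{thm:nonBC} or \cref{thm:BC}. So your proposal should be read against those partial results, and in that light it has a genuine gap at the crucial step.

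Your plan to match $\varphi_i$ with $1\otimes\dots\otimes\varphi\otimes\dots\otimes 1$ by ``tracing through crystalline cohomology and matching against geometric Frobenius at $\pp_i$ on $V_p^{\mathrm{std}}$'' is precisely the content of the conjecture, not a proof of it. The comparison isomorphism \eqref{eq:faltings} is only known to relate the \emph{total} crystalline/\'etale structures; there is no available functoriality statement that carries the partial Frobenius on the special fibre of $Y$ to a Frobenius on one tensor factor of $\DdR$ of the tensor-induced representation. The paper's $d=2$ argument bypasses this entirely: under hypothesis (c) of \cref{thm:nonBC} the four products $\alpha_1\alpha_2,\alpha_1\beta_2,\beta_1\alpha_2,\beta_1\beta_2$ are distinct, so each joint $(\varphi_1,\varphi_2)$-eigenspace coincides with a total-$\varphi$ eigenspace; since $\psi_p$ already respects $\varphi$, it is forced to respect the joint decomposition. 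Your proposed genericity condition $\alpha_1\beta_1\neq\alpha_2\beta_2$ is never satisfied (both sides equal $p^{w+1}$), so this is not a usable substitute.

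For the filtrations, the paper does not argue via a BGG-complex factorisation or a unit-root splitting. Instead it uses Poincar\'e duality to equip $D_p(f)$ with a symmetric bilinear form (the ``$\mathrm{GSO}_4$ accident''), shows that each $\Fil_i^+$ is maximal isotropic, and then observes that a nondegenerate $2$-dimensional orthogonal space has exactly two isotropic lines; so $\psi_p$ must send $\{\Fil_1^+,\Fil_2^+\}$ to $\{\Fil^+D_{\pp_1}\otimes D_{\pp_2},\,D_{\pp_1}\otimes\Fil^+D_{\pp_2}\}$ in some order. To pin down the order when $k_1=k_2$, the paper invokes \cref{thm:main}, whose proof is the main technical work of the paper (geometric Jacquet--Langlands via Goren--Oort strata, plus higher Coleman theory \`a la Boxer--Pilloni) and is not reflected in your sketch. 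The paper explicitly remarks that this orthogonal-form trick has no evident analogue for $d\ge 3$, which is why the general case remains open.
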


 \subsection{Relation to the plectic conjectures}

  \label{rmk:plecticGal}
  It is conjectured in \cite{nekovarscholl16} that $V_p(f)$ has an intrinsically defined action of the \emph{plectic Galois group} $\Gamma_{\QQ}^{\plec} \supseteq \Gamma_\QQ$, isomorphic to $S_d \ltimes \Gamma_F^d$, and that $\psi_p$ can be chosen to intertwine this with the obvious action of $\Gamma_{\QQ}^{\plec}$ on the tensor induction.

  If we assume the existence of this canonical $\Gamma_\QQ^{\plec}$-action on $V_p$, then we can state our conjecture more intrinsically as follows (removing the isomorphism $\psi$ from the picture). By restriction we obtain an action of $\Gamma_{\Qp}^{\plec} \cong (\Gamma_{\Qp})^d$ on $V_p$, and we can define
  \[ \DdR^{\plec}\left(V_p(f) |_{\Gamma_{\Qp}^{\plec}}\right) \coloneqq \left[V_p(f) \otimes (\mathbf{B}_{\dR} \otimes \dots \otimes \mathbf{B}_{\dR})\right]^{\Gamma_{\Qp}^{\plec}}. \]
  This gives a vector space equipped with $d$ partial Frobenii and partial filtrations, whose underlying (usual) filtered $\varphi$-module is $\DdR\left(V_p(f)|_{\Gamma_{\Qp}}\right)$. Then the ``correct'' conjecture is that \eqref{eq:faltings} is in fact an isomorphism
  \[ \DdR^{\plec}\left(V_p(f) |_{\Gamma_{\Qp}^{\plec}}\right) \cong D_p(f) \]
  commuting with the partial Frobenii and filtrations. This explains the name we have given to our conjecture: we are seeking a plectic refinement of the comparison isomorphism of $p$-adic Hodge theory. (In the formulation of \cref{main} above, lacking a canonically defined $\Gamma^{\plec}_{\Qp}$-action on $V_p$, we have substituted the $\Gamma^{\plec}_{\Qp}$-action transported from the tensor induction via $\psi$.)

  \begin{remark}
   It would be interesting to extend the conjecture to the case when $p$ is not totally split in $F$. In this case we do not know how to define the functor $\DdR^{\plec}$ (or even what the target category of this functor should be). We understand that this problem will be treated in forthcoming work of Lukas Kofler.
  \end{remark}

\section{Results for general \texorpdfstring{$d$}{d}: statements}

 We shall prove the following theorems in this paper. We note that all of these would be immediate consequences of \cref{main}, but our proofs are unconditional, and our purpose is to derive evidence for the conjecture. For brevity, we shall write $\Fil_i^+ = \Fil_i^{(t_i + 1)}$ for the unique nontrivial step in the $i$-th filtration; and for $S \subseteq \{1, \dots, d\}$ we put $\Fil^S D_p(f) = \bigcap_{i \in S} \Fil_i^+$. We shall recall the construction of these subspaces in the next section; in particular we will show that $\dim \left(\Fil^S D_p\right) = 2^{d - |S|}$ for every $S$, and the collection of subspaces $(\Fil^S D_p)_{S \subseteq \{1, \dots, d\}}$ forms a (decreasing) \emph{$I$-filtration} on $D_p(f)$ in the sense of \cite{NS-plectic1}, where $I$ is the lattice of subsets of $\{1, \dots, d\}$.

 \begin{definition}
  For $i \in \{1, \dots, d\}$ and $\alpha_i$ a root of the Hecke polynomial at $\pp_i$, we say $\alpha_i$ has \emph{strictly small slope} if $v_p(\alpha_i) < k_i + t_i$.
 \end{definition}

 This is a special case of the notion of ``strictly small slope'' defined in \cite{boxerpilloni20} for general reductive groups. Note that $v_p(\alpha_i)$ is always in the interval $[t_i, t_i + k_i + 1]$, and the slopes of the two roots are symmetric about the midpoint of the interval; so there is always at least one strictly-small-slope root if $k_i > 1$.

 \begin{theorem}
  \label{thm:main}
  Let $S \subseteq \{1, \dots, d\}$. For each $i \in S$, assume that the Hecke polynomial at $\pp_i$ has distinct roots, and that one of these roots $\alpha_i$ has strictly small slope.

  Then the simultaneous eigenspace $\bigcap_{i \in S} D_p(f)^{\varphi_i = \alpha_i}$ has dimension $2^{d - |S|}$, and has zero intersection with the sum $\sum_{i \in S} \Fil_i^+$, so the projection map
  \[ \bigcap_{i \in S} D_p(f)^{\varphi_i = \alpha_i} \xrightarrow{\ \cong\ } \frac{D_p(f)}{\sum_{i \in S} \Fil_i^+}\]
  is an isomorphism. Moreover, this isomorphism is strictly compatible with the partial filtrations, in the following sense: for each $T \subseteq \{1, \dots, d\}$, this map restricts to a bijection
  \[ \left(\Fil^T D_p(f)\right)\cap \left(\bigcap_{i \in S} D_p(f)^{\varphi_i = \alpha_i}\right)
   \xrightarrow{\ \cong\ } \frac{\Fil^T D_p(f)}{\Fil^T D_p(f) \cap\left(\sum_{i \in S} \Fil_i^+ D_p(f)\right)}, \]
  with both sides being zero unless $S \cap T = \varnothing$.
 \end{theorem}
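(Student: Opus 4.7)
The plan is to argue by induction on $|S|$, the key input being a single-direction transversality statement that isolates the role of the strictly-small-slope hypothesis. The central technical claim, and what I see as the main obstacle, is that for each $i \in S$ with $v_p(\alpha_i) < t_i + k_i$, any $\varphi_i$-stable line inside $\Fil_i^+ D_p(f)$ has $\varphi_i$-eigenvalue of $p$-adic valuation at least $t_i + k_i + 1$, so no $\alpha_i$-eigenvector can lie in $\Fil_i^+$. This is the partial-structure analogue of Mazur's weak-admissibility inequality; it cannot be read off from the conjectural tensor product decomposition (which is what we are trying to probe, not use) and must be established directly from the $\varphi_i$-action on the top piece of the $i$-th partial Hodge filtration, presumably via the BGG-style description recalled in \cref{sect:BGGcplx}. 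Combined with the partial Eichler--Shimura relation $(\varphi_i - \alpha_i)(\varphi_i - \beta_i) = 0$ and the distinct-roots hypothesis, this yields the semisimple decomposition $D_p(f) = D_p(f)^{\varphi_i = \alpha_i} \oplus D_p(f)^{\varphi_i = \beta_i}$ together with $D_p(f)^{\varphi_i = \alpha_i} \cap \Fil_i^+ = 0$.

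A second structural ingredient is an \emph{independence} of the plectic data across different primes: I expect $\varphi_{i_0}$ to preserve $\Fil_j^+$ for every $j \ne i_0$, because the modular correspondence defining $\varphi_{i_0}$ alters only the $\pp_{i_0}$-part of the Hilbert--Blumenthal abelian variety, leaving holomorphy at the other primes intact. With this in hand, for $i_0 \in S$ and $S' = S \setminus \{i_0\}$, the subspace $\sum_{j \in S'}\Fil_j^+$ is $\varphi_{i_0}$-stable, and the inductive isomorphism $E' := \bigcap_{j \in S'} D_p^{\varphi_j = \alpha_j} \cong Q' := D_p / \sum_{j \in S'}\Fil_j^+$ becomes $\varphi_{i_0}$-equivariant. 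Passing to $\alpha_{i_0}$-eigenspaces on both sides, the transversality applied inside $Q'$ shows that the $\alpha_{i_0}$-eigenspace of $\varphi_{i_0}$ on $Q'$ is transverse to the image of $\Fil_{i_0}^+$, hence injects into $Q'/\mathrm{image}(\Fil_{i_0}^+) = D_p/\sum_{j \in S}\Fil_j^+$. Transporting along the inductive isomorphism yields $E \hookrightarrow D_p/\sum_{j \in S}\Fil_j^+$, hence $\dim E \le 2^{d-|S|}$; equality of dimensions follows from the equal-multiplicity statement for simultaneous $(\varphi_j)_{j \in S}$-eigenspaces in $D_p(f)$, forced by the tensor-induction structure of $V_p(f)|_{\Gamma_{\Qp}}$ coming from \eqref{eq:defpsi}.

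The strict compatibility with $\Fil^T$ is then formal. If $T \cap S$ contains some $i$, then $\Fil^T \subseteq \Fil_i^+ \subseteq \sum_{j \in S}\Fil_j^+$ makes the right-hand quotient zero, while the single-direction transversality forces $\Fil^T \cap E \subseteq \Fil_i^+ \cap D_p^{\varphi_i = \alpha_i} = 0$. If $T \cap S = \varnothing$, the subspace $\Fil^T$ is $\varphi_j$-stable for every $j \in S$ by the same independence, and the entire inductive argument repeats verbatim inside $\Fil^T$ to deliver the claimed bijection.
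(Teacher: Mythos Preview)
Your inductive scheme rests on two structural inputs that you treat as essentially formal, but neither can be obtained by the arguments you sketch.

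The more serious gap is your ``independence'' claim that $\varphi_{i_0}$ preserves $\Fil_j^+$ for $j \ne i_0$. The heuristic you offer (that the $\pp_{i_0}$-Frobenius isogeny leaves holomorphy at the other places intact) does not translate into a proof: the partial Frobenii are endomorphisms of the \emph{special fibre}, while $\Fil_j^+$ is defined via the BGG complex on the generic fibre (equivalently, on the rigid space $\cX$). The partial Frobenius $\varphi_{i_0}$ has no canonical lift to all of $\cX$; it only lifts over the $S$-ordinary locus $\cX^{S-\ord}$. This is exactly why the paper does not attempt to prove $\varphi_i$-stability of $\Fil_j^+$ directly; in fact the authors state explicitly (end of \S4) that they ``have no reason to expect the $\Fil^T D_p(f)$ to be invariant under these operators'', and the stability statement you want is derived as \cref{cor:phipreserves} \emph{from} the theorem (and only under the stronger hypothesis that both roots at each $\pp_i$, $i \ne j$, have strictly small slope). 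So your proposed induction is circular: you are assuming a consequence of the result in order to prove it.

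The same problem infects your ``partial weak admissibility'' input. You are asking for a Newton-above-Hodge inequality for the pair $(\varphi_i, \Fil_i^+)$ acting on the whole $2^d$-dimensional $D_p(f)$, but there is no $\varphi_i$-stable $\cO_L$-lattice adapted to $\Fil_i^+$ available from the BGG description alone, so the usual Mazur-type argument has nothing to bite on. What the paper does instead is bypass both issues simultaneously: it introduces an auxiliary $I$-prefiltration $\sF^T D_p$ coming from the compactly-supported cohomology $\HH^d_c(\cX^{S-\ord}, \Fil^T\BGG)$, which \emph{is} $\varphi_i$-stable for $i \in S$ because the partial Frobenii lift over $\cX^{S-\ord}$; shows via geometric Jacquet--Langlands (Goren--Oort strata and the Tian--Xiao description of their cohomology) that the $f$-eigenspace in this compactly-supported cohomology still computes $D_p$; and then uses higher Coleman theory (Boxer--Pilloni) to control the small-slope part of each sheaf-cohomology term $H^*_c(\cX^{S-\ord}, \omega_T)$, which is what actually produces both the vanishing for $S \cap T \ne \varnothing$ and the one-dimensionality otherwise. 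The comparison $\sF^T \subseteq \Fil^T$ plus a graded-piece count then gives the strict compatibility.

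Finally, your dimension count for $\bigcap_{i \in S} D_p^{\varphi_i = \alpha_i}$ via ``the tensor-induction structure of $V_p(f)$'' is also not available: the isomorphism $\psi_p$ of \eqref{eq:defpsi} is not known to intertwine $\varphi_i$ with the $i$-th factor Frobenius (that is the content of \cref{main}), so it gives no direct information about individual $\varphi_i$-eigenspace dimensions. The paper obtains these dimensions from the Coleman-theoretic classicality statement, not from $\psi_p$.
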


 \begin{remark}
  The condition that the Hecke polynomial have distinct roots at each $i \in S$ is not actually needed for the theorem, but handling the case of a multiple root complicates the proof; and it is known that this case can never occur if the Tate conjecture holds (by the same arguments as in \cite{colemanedixhoven98}, using the assumption that $p$ splits completely in $F$), so we shall not pursue it here.
 \end{remark}

 The proof of \cref{thm:main} will be given in \cref{sect:mainproof}. Using this theorem, we can prove \cref{main} under some mild assumptions when $[F : \QQ] = 2$, see \S\ref{sect:quadratic}. In general we have the following corollaries:

 \begin{corollary}\label{cor:phipreserves}
  Let $j \in \{1, \dots, d\}$ be given, and suppose that for all $i \ne j$, the Hecke polynomial of $f$ at $\pp_i$ has two distinct roots which both have strictly small slope. Then the subspace $\Fil_j^+ D_p$ is stable under the operators $\varphi_i$ for $i \ne j$.
 \end{corollary}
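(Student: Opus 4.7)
The plan is to use \cref{thm:main} to simultaneously diagonalise the commuting partial Frobenii $\varphi_i$ for $i \in S \coloneqq \{1, \dots, d\} \setminus \{j\}$, and then to show that $\Fil_j^+ D_p$ decomposes compatibly with this diagonalisation. For each $i \in S$ the partial Eichler--Shimura relation $(\varphi_i - \alpha_i)(\varphi_i - \beta_i) = 0$, combined with the assumption $\alpha_i \ne \beta_i$, forces $\varphi_i$ to be diagonalisable on $D_p$ with eigenvalues in $\{\alpha_i, \beta_i\}$. Since the $\varphi_i$ commute, they admit a simultaneous eigenspace decomposition $D_p = \bigoplus_{\underline{\gamma}} E_{\underline{\gamma}}$, where $\underline{\gamma} = (\gamma_i)_{i \in S}$ ranges over the $2^{d-1}$ choices with $\gamma_i \in \{\alpha_i, \beta_i\}$ and $E_{\underline{\gamma}} \coloneqq \bigcap_{i \in S} D_p^{\varphi_i = \gamma_i}$. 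The strictly-small-slope hypothesis on every root ensures that \cref{thm:main} applies to each such $\underline{\gamma}$, so in particular $\dim E_{\underline{\gamma}} = 2$ for every choice.

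Since any commuting family of diagonalisable operators preserves a subspace $W$ if and only if $W = \bigoplus_{\underline{\gamma}}(W \cap E_{\underline{\gamma}})$, it suffices to establish this equality for $W = \Fil_j^+ D_p$. The strict-compatibility clause of \cref{thm:main} applied with $T = \{j\}$ (so $S \cap T = \varnothing$) identifies the intersection $\Fil_j^+ D_p \cap E_{\underline{\gamma}}$ with the quotient $\Fil_j^+ D_p / \bigl(\Fil_j^+ D_p \cap \sum_{i \in S} \Fil_i^+ D_p\bigr)$; in particular its dimension $n$ is independent of $\underline{\gamma}$. As the subspaces $\Fil_j^+ D_p \cap E_{\underline{\gamma}}$ sit in distinct direct summands of $D_p$, they are automatically linearly independent, so $n \cdot 2^{d-1} \le \dim \Fil_j^+ D_p = 2^{d-1}$, forcing $n \le 1$.

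The remaining task, and the step I expect to be the main obstacle, is to rule out $n = 0$, i.e.\ to show that $\Fil_j^+ D_p$ is not contained in $\sum_{i \in S} \Fil_i^+ D_p$. I expect this to follow readily from the plectic structure of the partial filtrations reviewed in \cref{sect:BGGcplx}: because the family $\{\Fil_i^+ D_p\}_{i=1}^d$ is modelled on the tensor-factor filtrations in $\bigotimes_i D_{\pp_i}$, it is a distributive family of subspaces, so inclusion--exclusion applied to the identities $\dim \Fil^T D_p = 2^{d-|T|}$ gives $\dim \sum_{i=1}^d \Fil_i^+ D_p - \dim \sum_{i \in S} \Fil_i^+ D_p = (2^d - 1) - (2^d - 2) = 1 > 0$. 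Once $n = 1$ is established, $\bigoplus_{\underline{\gamma}}(\Fil_j^+ D_p \cap E_{\underline{\gamma}})$ has total dimension $2^{d-1} = \dim \Fil_j^+ D_p$, hence coincides with $\Fil_j^+ D_p$, and each summand is visibly $\varphi_i$-stable for $i \in S$.
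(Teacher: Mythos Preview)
Your proof is correct and follows essentially the same route as the paper's: apply \cref{thm:main} with $S = \{1,\dots,d\}\setminus\{j\}$ and $T = \{j\}$ for each of the $2^{d-1}$ eigenvalue systems $\underline{\gamma}$, obtain a line in $\Fil_j^+ D_p \cap E_{\underline{\gamma}}$ for every $\underline{\gamma}$, and observe that these $2^{d-1}$ independent lines span $\Fil_j^+ D_p$. The paper's version is terser, simply asserting that each intersection is a line; you have unpacked why its dimension is exactly~$1$, splitting this into $n \le 1$ (by counting inside $\Fil_j^+$) and $n \ge 1$ (via inclusion--exclusion on the distributive family $\{\Fil_i^+\}$). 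A slightly cleaner way to see $n=1$ directly is to note that distributivity gives $\Fil_j^+ \cap \sum_{i\in S}\Fil_i^+ = \sum_{i\in S}\Fil^{\{i,j\}} = \sum_{T\supsetneq\{j\}}\Fil^T$, so the quotient you are computing is precisely $\Gr^{\{j\}} D_p$, which is $1$-dimensional by \cref{prop:plecfiltration}; but your argument is fine as written.
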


 \begin{proof}
  We apply the theorem with $S = \{1, \dots, d\} - \{j \}$ and each of the $2^{d-1}$ choices of roots of the Hecke polynomials, which all satisfy the conditions. This gives $2^{d-1}$ linearly independent lines in $\Fil_j^+$, all of which are stable under $\varphi_i$ for $i \ne j$; and the sum of these subspaces must be all of $\Fil_j^+$.
 \end{proof}

 \begin{corollary}
  Suppose that for all $i \in \{1, \dots, d\}$, the Hecke polynomial has distinct roots, and that one of these roots $\alpha_i$ has strictly small slope. For each $S \subseteq \{1, \dots, d\}$, define
  \[ X(S) \coloneqq  (\Fil^S D_p(f)) \cap \left(\bigcap_{i \notin S} D_p(f)^{(\varphi_i = \alpha_i)}\right). \]
  Then $X(S)$ is one-dimensional for all $S$, and we have
  \[ D_p(f) = \bigoplus_{S \subseteq \{1, \dots, d\}} X(S). \]
  Moreover, the subspaces $X(S)$ split both the Hodge filtration, and the filtration by $\varphi_i$-eigenspaces, in the sense that for every $i \in \{1, \dots, d\}$ we have
  \[ \Fil_i^+ D_p(f) = \bigoplus_{\substack{S \subseteq \{1, \dots, d\} \\ i \in S} } X(S),\qquad\text{and}\qquad D_p(f)^{(\varphi_i = \alpha_i)} = \bigoplus_{\substack{S \subseteq \{1, \dots, d\} \\ i \notin S} } X(S). \]
 \end{corollary}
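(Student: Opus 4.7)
My plan is to apply \cref{thm:main} with the theorem's ``$S$'' replaced by $S^c := \{1,\dots,d\} \setminus S$ and ``$T$'' replaced by $S$, which are disjoint. This produces, for each $S \subseteq \{1,\dots,d\}$, a canonical isomorphism
\[ X(S) \xrightarrow{\ \cong\ } \Fil^S D_p(f) \big/ \bigl( \Fil^S D_p(f) \cap \textstyle\sum_{i \in S^c} \Fil_i^+ D_p(f)\bigr). \]

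The first step is to show $\dim X(S) = 1$. Since $\Fil^{S \cup J} D_p = \Fil^S D_p \cap \bigcap_{j \in J} \Fil_j^+$ has dimension $2^{d-|S|-|J|}$ for every $J \subseteq S^c$, inclusion--exclusion gives
\[ \dim \textstyle\sum_{j \in S^c} \Fil^{S \cup \{j\}} D_p = \sum_{\varnothing \ne J \subseteq S^c} (-1)^{|J|+1}\, 2^{d-|S|-|J|} = 2^{d-|S|} - 1. \]
Combined with the distributivity property of the $I$-filtration, which gives $\Fil^S \cap \sum_{j \in S^c} \Fil_j^+ = \sum_{j \in S^c} \Fil^{S \cup \{j\}}$, the quotient in the displayed isomorphism has dimension $1$.

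The second step is to show $\Fil^S D_p(f) = \sum_{S' \supseteq S} X(S')$ by downward induction on $|S|$. The case $|S| = d$ is trivial since both sides equal the $1$-dimensional space $\Fil^S D_p$. For the inductive step, the inductive hypothesis applied to each $S \cup \{j\}$ gives $\sum_{j \in S^c} \Fil^{S \cup \{j\}} D_p = \sum_{S' \supsetneq S} X(S')$. Moreover, $X(S)$ meets $\sum_{j \in S^c} \Fil_j^+ D_p$ trivially, because $X(S) \subseteq \bigcap_{j \in S^c} D_p^{\varphi_j = \alpha_j}$, which has zero intersection with $\sum_{j \in S^c} \Fil_j^+$ by the kernel statement in \cref{thm:main} (the case $T = \varnothing$). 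Summing $X(S)$ with $\sum_{S' \supsetneq S} X(S')$ thus gives a subspace of $\Fil^S D_p$ of dimension $1 + (2^{d-|S|} - 1) = 2^{d-|S|}$, hence equal to $\Fil^S D_p$. Specialising to $S = \varnothing$ yields $D_p(f) = \sum_S X(S)$, and since $\sum_S \dim X(S) = 2^d = \dim D_p$, the sum is direct.

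The two splitting statements then fall out quickly: taking $S = \{i\}$ in $\Fil^S D_p = \bigoplus_{S' \supseteq S} X(S')$ gives $\Fil_i^+ D_p = \bigoplus_{S \ni i} X(S)$. For the Frobenius eigenspace statement, the containment $\bigoplus_{S \not\ni i} X(S) \subseteq D_p(f)^{\varphi_i = \alpha_i}$ holds by the definition of $X(S)$, and both sides have dimension $2^{d-1}$ (applying \cref{thm:main} with $S = \{i\}$ to compute the right-hand side), so equality holds. The main subtle point is the distributivity used in Step 1: one needs $\Fil^S \cap \sum_{j \in S^c} \Fil_j^+ = \sum_{j \in S^c} \Fil^{S \cup \{j\}}$, which reflects the ``Künneth-like'' character of the $I$-filtration and should be a formal consequence of the BGG construction of the partial filtrations reviewed in \cref{sect:BGGcplx}.
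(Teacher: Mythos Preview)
Your argument is correct and follows essentially the same route as the paper's: apply \cref{thm:main} with $S^c$ in place of $S$ to get $\dim X(S)=1$, then run downward induction on $|S|$ to obtain $\Fil^S D_p = \bigoplus_{T \supseteq S} X(T)$, from which the splitting statements follow. One small remark: you need not hedge on distributivity at the end --- it is exactly what \cref{prop:plecfiltration} (via \cref{prop:distrib}) establishes, and note that your inclusion--exclusion computation already relies on it implicitly, since inclusion--exclusion for dimensions of a family of subspaces can fail outside the distributive setting.
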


 These last two formulae should be seen as $p$-adic counterparts of the plectic Hodge decomposition on the cohomology over $\CC$ described in \cite{nekovarscholl16}.

 \begin{proof}
  We shall prove the following claim: for all $S$ we have $\Fil^S D_p = \bigoplus_{T \supseteq S} X(T)$. By downward induction on $|S|$, we may suppose that the assertion is true for every strict superset of $S$ (which is vacuously satisfied if $S = \{1, \dots, d\}$).

  We firstly claim that
  \[ \sum_{T \supsetneq S} \Fil^T D_p = \bigoplus_{T \supsetneq S} X(T). \]
  The induction hypothesis shows that $\sum_{T \supsetneq S} \Fil^T D_p = \sum_{T \supsetneq S} X(T)$, so what we must prove is that the sum is direct; but this follows by dimension-counting, since we know that $\sum_{T \supsetneq S} \Fil^T D_p$ has dimension $2^{d-|S|} - 1$, and each $X(T)$ has dimension one.

  Having proved the claim, it remains to show that $X(S)$ maps isomorphically onto $\Gr^S D_p$. Since $X(S)$ and $\Gr^S D_p$ are both one-dimensional, this amounts to the assertion that $X(S)$ is not contained in $\sum_{T \supsetneq S} \Fil^T$. But $\sum_{T \supsetneq S} \Fil^T \subseteq \sum_{i \notin S} \Fil_i^+$, and the theorem shows that this space has zero intersection with $\bigcap_{i \in S} D_p^{(\varphi_i = \alpha_i)}$.

  Having proved the claim, setting $S = \varnothing$ we conclude that $D_p$ is the direct sum of the $X(T)$'s, and the remaining assertions of the theorem are obvious.
 \end{proof}

\section{The plectic filtration}\label{sect:BGGcplx}

 Since we do not know a good reference for the existence of partial filtrations on $D_p(f)$ when $\underline{k} \ne (0, \dots, 0)$, we give an account of the construction below. In \cite{NS-plectic1} this is proved for $\underline{k} = (0, \dots, 0)$ using Dolbeault cohomology over $\CC$, as a consequence of the stronger statement that the complex-analytic cohomology has a plectic Hodge structure. We shall outline a slightly different proof for general coefficients, as this foreshadows the $p$-adic computations we shall use to prove \cref{thm:main} later in the paper. In this section all algebraic varieties are over $L$, and we write simply $Y$ instead of $Y_L$ for brevity.

 \subsection{Preliminaries on \texorpdfstring{$I$}{I}-filtrations}
  \newcommand{\cA}{\mathcal{A}}

  We begin with some generalities on the theory of filtrations indexed by lattices, as developed in \cite{NS-plectic1}. Throughout this paper, $I$ will denote the set of subsets of $\{1, \dots, d\}$ (for some integer $d \ge 1$), with the lattice operations of union and intersection; this is a distributive lattice.

  \begin{lemma}
   Let $\cA$ be an abelian category, and $M$ an object of $\cA$. If we are given arbitrary subobjects $\Fil_i^+ M \subseteq M$ for $i = 1, \dots, d$, then the family of subsets $(\Fil^S M)_{S \in I}$ defined by $\Fil^S M = \bigcap_{i \in S} \Fil_i^+ M$ is a (decreasing) \emph{weak $I$-filtration} in the sense of \cite[(1.1.2)]{NS-plectic1}, and every decreasing weak $I$-filtration has this form.\qed
  \end{lemma}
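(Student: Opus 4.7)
The plan is to unpack the definition of a (decreasing) weak $I$-filtration from \cite[(1.1.2)]{NS-plectic1}, which for our distributive lattice $I$ of subsets of $\{1,\dots,d\}$ amounts to requiring a family $(F^S)_{S \in I}$ of subobjects satisfying (a) $F^\varnothing = M$, (b) $S \subseteq T \Rightarrow F^T \subseteq F^S$ (decreasing), and (c) the meet/join compatibility $F^{S \cup T} = F^S \cap F^T$. Once this is laid out, each direction of the lemma reduces to a one-line check.

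For the forward direction, given arbitrary subobjects $\Fil_i^+ M$ for $i = 1, \dots, d$, I would set $\Fil^S M \coloneqq \bigcap_{i \in S} \Fil_i^+ M$ (with the empty intersection interpreted as $M$, yielding (a)). Enlarging $S$ to $T$ only increases the number of subobjects being intersected, giving (b); and (c) is the evident identity
\[ \bigcap_{i \in S \cup T} \Fil_i^+ M = \Bigl(\bigcap_{i \in S} \Fil_i^+ M\Bigr) \cap \Bigl(\bigcap_{i \in T} \Fil_i^+ M\Bigr). \]

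For the converse, given a weak $I$-filtration $(F^S)_{S \in I}$, I would define $\Fil_i^+ M \coloneqq F^{\{i\}}$, and then check by induction on $|S|$, using axiom (c) repeatedly, that
\[ F^S = F^{\bigcup_{i \in S} \{i\}} = \bigcap_{i \in S} F^{\{i\}} = \bigcap_{i \in S} \Fil_i^+ M, \]
so the filtration is recovered from its singleton values. This shows the two constructions are mutually inverse, establishing the claimed bijection.

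There is essentially no obstacle here: the content of the lemma is that an $I$-indexed filtration for $I$ the Boolean lattice is determined by its values on atoms, which is a formal consequence of the axioms. The only thing one must be careful about is the convention for the empty set, and noting that no exactness/flatness hypothesis is required because only intersections (rather than sums or quotients) appear.
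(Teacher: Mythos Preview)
Your proposal is correct and is exactly the natural argument; the paper itself omits the proof entirely (the lemma is stated with a \qed and no further comment), so you have simply written out the routine verification that the authors deemed obvious.
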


  In general these weak $I$-filtrations will not be $I$-filtrations (in the sense of Definition 1.2.1 of \emph{op.cit.}); the prototypical example is to take $d = 3$, $M = K^2$ for a field $K$, and the subspaces  $\Fil_i^+ M$ to be any three distinct lines in $M$.

  \begin{proposition}
   \label{prop:distrib}
   A weak $I$-filtration $(\Fil^S M)_{S \in I}$ is an $I$-filtration if and only if the the \emph{distributivity} condition
   \[ \left(\Fil^S M\right) \cap \left(\Fil^T M + \Fil^U M\right) = \left(\Fil^{S \cap T} M\right) + \left(\Fil^{S \cap U} M\right) \]
   holds for all $S, T, U \subseteq \{1, \dots, d\}$.
  \end{proposition}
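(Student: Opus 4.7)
My plan is to interpret the $I$-filtration condition from \cite{NS-plectic1} as the requirement that the sublattice of $\mathrm{Sub}(M)$ generated by $\{\Fil^S M\}_{S \in I}$ under the operations $\cap$ and $+$ be distributive — equivalently, that the assignment $S \mapsto \Fil^S M$ behave compatibly with both lattice operations on $I$. Since $\Fil^{S \cup T} M = \Fil^S M \cap \Fil^T M$ is automatic from $\Fil^S M = \bigcap_{i \in S} \Fil_i^+ M$, the only potential obstruction lies in how sums of the $\Fil^S M$'s interact with meets, which is precisely what the stated distributivity condition measures. The proof is then essentially a lattice-theoretic translation.

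For the forward direction, assuming $(\Fil^S M)$ is an $I$-filtration, the generated sublattice is distributive, so the standard identity $A \cap (B + C) = (A \cap B) + (A \cap C)$ applies with $A = \Fil^S M$, $B = \Fil^T M$, $C = \Fil^U M$; rewriting each pairwise meet via the indexing formula produces the stated identity directly. For the backward direction, the goal is to show that the hypothesized identity forces the full sublattice to be distributive. Since the subobject lattice $\mathrm{Sub}(M)$ in any abelian category is modular, distributivity of a sublattice reduces to ruling out an embedded copy of the diamond $M_3$; the hypothesis, applied to triples $(S, T, U)$ chosen to detect such a configuration, accomplishes exactly this, after which a routine induction on the complexity of lattice polynomials in the generators extends distributivity from the $\Fil^S M$'s to arbitrary elements of the sublattice.

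The main obstacle is the backward direction: amplifying an identity phrased only in terms of the ``basic'' subobjects $\Fil^S M$ to a distributive-lattice structure on arbitrary elements of the generated sublattice. This is a purely lattice-theoretic step, relying on modularity of $\mathrm{Sub}(M)$ together with the characterization of distributive modular lattices by the absence of $M_3$ sublattices; carrying it out rigorously requires some care with normal forms for lattice polynomials in the $\Fil^S M$'s, but needs no input beyond the formal structure of the $\Fil^S M$'s as intersections of the $\Fil_i^+ M$.
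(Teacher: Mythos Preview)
The paper does not give its own argument: its entire proof is the single sentence ``See Proposition 1.2.7 of \cite{poliposi05}.'' So you are not being compared against a method but against a citation; what you are proposing is essentially to reproduce the content of that reference.

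Your plan has the right shape, but the backward direction is where all the content lies, and your sketch does not really engage with it. Saying ``rule out an $M_3$, then do a routine induction on lattice polynomials'' is a slogan, not a proof. The point that makes the induction tractable here --- and which you do not use --- is that the family $\{\Fil^S M\}_{S \in I}$ is already closed under meets (since $\Fil^S M \cap \Fil^T M = \Fil^{S\cup T} M$), so every element of the generated sublattice is a finite join $\sum_j \Fil^{S_j} M$. One then shows, by induction on the number of joinands and repeated use of modularity together with the hypothesised identity, that meets distribute over such joins. This is precisely the Polishchuk--Positselski argument, and it is not automatic; if you want a self-contained proof you need to actually carry this out (or appeal to the equivalent characterisation via a simultaneous splitting of $M$ adapted to all the $\Fil_i^+$).

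A separate issue you should flag: your forward-direction computation gives $\Fil^S M \cap \Fil^T M = \Fil^{S\cup T} M$, so applying the distributive law yields $\Fil^{S\cup T} M + \Fil^{S\cup U} M$ on the right, not $\Fil^{S\cap T} M + \Fil^{S\cap U} M$ as printed. (Indeed, setting $S=\varnothing$ in the printed identity forces $\Fil^T M + \Fil^U M = M$ for all $T,U$, which is absurd.) You glide past this by saying the rewriting ``produces the stated identity directly,'' but it does not; make explicit which version you are proving and note the apparent misprint.
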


  \begin{proof}
   See Proposition 1.2.7 of \cite{poliposi05}.
  \end{proof}

 \subsection{BGG complexes}
  We choose a smooth projective toroidal compactification $\imath: Y \into X$. Recall that $\mu$ denotes the character $(\underline{k}, \underline{t})$ of the diagonal maximal torus of $G = \operatorname{Res}_{F / \QQ} \GL_2$.

 \begin{definition} \
  \begin{enumerate}[(i)]
   \item For $T \subseteq \{1, \dots, d\}$, let $\kappa_T$ denote the (possibly non-dominant) weight of the diagonal torus of $G$ whose $i$-th entry is $(k_i + 2, t_i)$ if $i \in T$, and $(-k_i, t_i + k_i + 1)$ otherwise. We let $\omega_T$ denote the line bundle on $X$ corresponding to $\kappa_T$, so that $f$ itself is a global section of $\omega_{\{1, \dots, d\}}$.
   \item The (dual) \emph{BGG complex} of weight $\mu$ is the complex of sheaves
   \[ \BGG = \left[ \omega_\varnothing \to \bigoplus_{|T| = 1} \omega_T \to \bigoplus_{|T| = 2} \omega_T \to \dots \to \omega_{\{1, \dots, d\}}\right], \]
   with differentials as in \cite[\S 2.15]{tianxiao16}.
  \end{enumerate}
 \end{definition}

 By a result of Faltings (see theorem 2.16 of \cite{tianxiao16}), the BGG complex is known to be quasi-isomorphic to the pushforward to $X$ of the Rham complex on $Y$ with coefficients in the vector bundle $\cL_{\mu, \dR}$, i.e. $\imath_*\! \left(\cL_{\mu, \dR} \otimes \Omega^\bullet_Y\right)$. Thus $\HH^d(X, \BGG) \cong H^d_{\dR}(Y, \cL_{\mu, \dR})$, and in particular we have $D_p = \HH^d(X, \BGG)\{f\}$. The general theory of derived functors gives a \emph{hypercohomology spectral sequence} starting at the $E_1$ page, $E_1^{ij} = H^j(X, \mathcal{B}^i_\mu) \Rightarrow  \HH^d(X, \BGG)$. All terms in this spectral sequence (from the $E_1$ page onwards) are finite-dimensional vector spaces with an action of the Hecke operators (compatible with the action on the abutment).

 \begin{remark}
  Note that (for $F \ne \QQ$) the Hecke operators do not act as correspondences on any one specific choice of toroidal compactification; but the cohomology of automorphic vector bundles is independent of the choice of toroidal boundary data, and the direct limit over all such compactifications does carry a Hecke action. So the action of the Hecke algebra on the spectral sequence is well-defined.
 \end{remark}

 Since passing to a generalised eigenspace is an exact functor, we obtain a spectral sequence $E_1^{ij} = \bigoplus_{|T| = i} H^j(X, \omega_T)\{f\}$ converging to $D_p(f)$. This spectral sequence simplifies greatly, because of the following standard result on the cohomology of the sheaves $\omega_T$ (which follows from the computation of $\overline{\partial}$-cohomology of discrete-series representations of $\operatorname{SL}_2(\mathbb{R})$; see \cite[\S 8]{harris90b}):

 \begin{proposition}\label{prop:vanish}
  The $f$-generalised eigenspace $H^i(X, \omega_T)\{f\}$ is zero if $i \ne d - |T|$, and is one-dimensional otherwise.\qed
 \end{proposition}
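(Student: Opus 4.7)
The plan is to reduce the cohomological computation to a purely archimedean, representation-theoretic one, factored over the $d$ real embeddings of $F$.

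First, since $f$ is a cuspidal newform of level $\mathfrak{N}$, its Hecke eigensystem away from $p\mathfrak{N}$ does not appear in the cohomology of the boundary strata of the toroidal compactification $X$ (where only Eisenstein eigensystems contribute). Consequently, the generalised eigenspace $H^j(X, \omega_T)\{f\}$ agrees with the corresponding generalised eigenspace in the interior / cuspidal cohomology $H^j_{\mathrm{cusp}}(X, \omega_T)\{f\}$. This step is the principal technical nuisance, since Hecke correspondences do not act on a single toroidal compactification; but working in the direct limit over toroidal boundary data (as in the remark preceding the statement), this reduction is standard.

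Next, I would apply the automorphic description of cuspidal cohomology of automorphic vector bundles (Matsushima's formula, as generalised to automorphic vector bundles by Harris and others) to write
\[ H^j_{\mathrm{cusp}}(X, \omega_T) \cong \bigoplus_\pi \pi_f^{U_1(\mathfrak{N})} \otimes H^j\bigl(\mathfrak{g}, K_\infty;\, \pi_\infty \otimes W_T\bigr), \]
where $\pi = \pi_\infty \otimes \pi_f$ ranges over cuspidal automorphic representations of $G(\Af)$ and $W_T$ is the finite-dimensional representation of $K_\infty$ corresponding to the dual weight $-\kappa_T$. Restricting to the $f$-generalised eigenspace isolates a single $\pi$, namely the cuspidal automorphic representation generated by $f$.

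Finally, since $G = \operatorname{Res}_{F/\QQ}\GL_2$, the pair $(\mathfrak{g}, K_\infty)$ and the archimedean component $\pi_\infty$ factor as tensor products over the $d$ infinite places. A Künneth decomposition reduces the relative Lie algebra cohomology to a tensor product of local factors, one for each $i \in \{1,\dots,d\}$. At the $i$-th place, $\pi_{\infty,i}$ is the (limit of) discrete series representation of $\GL_2(\RR)$ of weight $k_i + 2$, and $W_{T,i}$ is the $K_i$-type which is ``holomorphic'' when $i \in T$ and ``antiholomorphic'' when $i \notin T$. The classical $(\mathfrak{g}_i, K_i)$-cohomology computation recalled in \cite[\S 8]{harris90b} then shows that the local factor is one-dimensional in degree $j_i = 0$ if $i \in T$, one-dimensional in degree $j_i = 1$ if $i \notin T$, and zero in all other degrees. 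Summing the local degrees, the $f$-generalised eigenspace is concentrated in total degree $\sum_i (1 - [i \in T]) = d - |T|$ and is one-dimensional there, as claimed. The hard part, as noted, is the clean passage from $X$ to cuspidal cohomology; everything downstream is a Künneth-style bookkeeping exercise.
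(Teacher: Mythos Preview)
Your proposal is correct and is essentially the same approach the paper gestures at: the paper states the proposition without proof, merely remarking that it ``follows from the computation of $\overline{\partial}$-cohomology of discrete-series representations of $\operatorname{SL}_2(\mathbb{R})$; see \cite[\S 8]{harris90b}'', and what you have written is precisely the standard unpacking of that citation via the automorphic description of coherent cuspidal cohomology and the K\"unneth factorisation over the archimedean places. The only point you leave implicit is that $\pi_f^{U_1(\mathfrak{N})}$ is one-dimensional, which follows from newform theory and multiplicity one for $\GL_2/F$.
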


 Hence the spectral sequence degenerates at $E_1$, and $D_p(f)$ has a filtration with graded pieces $\Gr^i = \bigoplus_{|T| = i} H^{d-i}(X, \omega_T)\{f\}$ for $0 \le i \le d$. Our aim is to ``upgrade'' this to an $I$-filtration. We shall do this by first defining an $I$-filtration on the \emph{complex} $\BGG$. We will need the following auxiliary definition:

 \begin{definition}
  For $S \subseteq \{1, \dots, d\}$, the direct sum of the $\omega_T$ with $T \supseteq S$ is a subcomplex, and we denote this by $\Fil^S \BGG$.
 \end{definition}
%

 \begin{proposition}
  The collection of subcomplexes $\left(\Fil^S \BGG\right)_{S \in I}$ forms a decreasing $I$-filtration of $\BGG$ (in the abelian category of complexes of abelian sheaves on $X$).
 \end{proposition}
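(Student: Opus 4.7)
The plan is to invoke \cref{prop:distrib}, which reduces the claim to verifying (a) that $(\Fil^S \BGG)_{S \in I}$ is a weak $I$-filtration in the sense of the preceding lemma, and (b) that the distributivity condition holds. Before either of these, I would pause to confirm that each $\Fil^S \BGG$ really is a subcomplex: from the explicit formulae in \cite[\S 2.15]{tianxiao16}, the component of the BGG differential out of $\omega_T$ lands in $\bigoplus_{j \notin T} \omega_{T \cup \{j\}}$, so if $T \supseteq S$ then every $T \cup \{j\} \supseteq S$ too, and the direct sum $\bigoplus_{T \supseteq S} \omega_T$ is preserved by the differential.

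Step (a) is immediate from the direct-sum shape of $\Fil^S \BGG$: a summand $\omega_T$ appears in $\Fil^{\{i\}} \BGG$ exactly when $i \in T$, so it lies in $\bigcap_{i \in S} \Fil^{\{i\}} \BGG$ exactly when $T \ni i$ for all $i \in S$, i.e.\ $T \supseteq S$. This gives $\Fil^S \BGG = \bigcap_{i \in S} \Fil^{\{i\}} \BGG$, which is the defining property of a weak $I$-filtration.

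For step (b), the central observation is that any subobject of $\BGG$ formed as a finite combination of sums and intersections of the $\Fil^S \BGG$'s is again of the shape $\bigoplus_{T \in \mathcal{T}} \omega_T$ for some $\mathcal{T} \subseteq I$ (in fact an up-closed set); this is because the $\omega_T$ are pairwise distinct line-bundle direct summands of each term of $\BGG$. Consequently, subobject sums and intersections of such direct summands translate termwise into unions and intersections of the indexing collections. The distributivity condition of \cref{prop:distrib} therefore reduces to an elementary set-theoretic equality among collections of the form $\{T : T \supseteq S\}$ for various $S$ in the power set of $\{1, \dots, d\}$, which is verified by unwinding the containment conditions on $T$. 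The only potential obstacle is bookkeeping — keeping the lattice operations on $I$ matched correctly with sum and intersection of subcomplexes — but no substantive difficulty arises.
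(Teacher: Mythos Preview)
Your proposal is correct and follows essentially the same approach as the paper: invoke \cref{prop:distrib}, reduce to checking distributivity degree by degree, and observe that since each degree of $\BGG$ is graded by the summands $\omega_T$, sums and intersections of the $\Fil^S$'s correspond to set-theoretic unions and intersections of indexing collections, making distributivity automatic. You supply more detail than the paper (explicitly checking the subcomplex property and the weak $I$-filtration identity), but the underlying argument is the same.
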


 \begin{proof}
  By \cref{prop:distrib}, it suffices to verify the distributivity condition. Since the sum and intersection of subcomplexes is defined termwise, we may verify this for each degree of the complex individually (ignoring the differentials). As the filtration on each degree is given by a grading, the distributivity condition is obvious.
 \end{proof}
%
%

 \begin{proposition}
  \label{prop:hypercoh}
  For each $S \in I$, the $f$-generalised eigenspace in the hypercohomology of $\Fil^S \BGG$ vanishes outside degree $d$, and the natural map
  \[ \HH^d(X, \Fil^S \BGG)\{f\} \to \HH^d(X, \BGG)\{f\} = D_p(f)\]
  is an injection.
 \end{proposition}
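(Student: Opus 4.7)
The strategy is to apply the hypercohomology spectral sequence to the subcomplex $\Fil^S \BGG$ in exactly the same way as was done for $\BGG$ itself, and then compare the two spectral sequences via the natural inclusion. Concretely, I would first write down the spectral sequence
\[ E_1^{ij}(S) = \bigoplus_{\substack{T \supseteq S \\ |T| = i}} H^j(X, \omega_T)\{f\} \Longrightarrow \HH^{i+j}(X, \Fil^S \BGG)\{f\}, \]
which is obtained by passing to the $f$-generalised eigenspace in the standard hypercohomology spectral sequence of the subcomplex (legitimate since taking a generalised eigenspace is exact).

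For the vanishing claim, I would invoke \cref{prop:vanish}: each summand $H^j(X, \omega_T)\{f\}$ vanishes unless $j = d - |T| = d - i$. Hence $E_1^{ij}(S)$ is concentrated on the single antidiagonal $i + j = d$ (with the nonzero entries occurring only for $|S| \le i \le d$). There is consequently no room for any nontrivial differential on any page; the spectral sequence degenerates at $E_1$, and the abutment $\HH^k(X, \Fil^S \BGG)\{f\}$ vanishes for $k \ne d$.

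For the injectivity claim, I would use that the inclusion of complexes $\Fil^S \BGG \hookrightarrow \BGG$ is, in each degree, literally the inclusion of a direct summand (summing over $T \supseteq S$ versus over all $T$). Naturality of the hypercohomology spectral sequence gives a compatible map $E_r^{ij}(S) \to E_r^{ij}(\varnothing)$, which on the $E_1$ page is a split injection (the obvious direct summand inclusion) in each bidegree. Since both spectral sequences degenerate at $E_1$ and the associated filtration on $\HH^d(X, \BGG)\{f\}$ restricts to the corresponding filtration on $\HH^d(X, \Fil^S \BGG)\{f\}$, a standard filtration-argument shows that injectivity on each graded piece implies injectivity on the abutment.

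I do not foresee any real obstacle: once \cref{prop:vanish} is available, the argument is essentially formal. The only point warranting minor care is the naturality and compatibility of the two spectral sequences so as to conclude injectivity on the abutments; but this is built into the standard construction of the hypercohomology spectral sequence via Cartan--Eilenberg or Godement resolutions, which are functorial in the input complex.
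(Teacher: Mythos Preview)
Your proof is correct, and the vanishing argument is identical to the paper's. For injectivity, however, the paper takes a slightly different route: rather than comparing the two spectral sequences directly, it applies the same spectral-sequence-and-\cref{prop:vanish} argument to the \emph{quotient} complex $\BGG / \Fil^S \BGG$ (whose degree-$i$ term is $\bigoplus_{|T|=i,\, T \not\supseteq S} \omega_T$), concluding that its $f$-generalised hypercohomology also vanishes outside degree $d$; injectivity then drops out of the long exact sequence
\[
\cdots \to \HH^{d-1}(X, \BGG / \Fil^S \BGG)\{f\} \to \HH^d(X, \Fil^S \BGG)\{f\} \to \HH^d(X, \BGG)\{f\} \to \cdots.
\]
Both arguments are formal once \cref{prop:vanish} is available. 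The paper's version has the mild practical advantage that it sidesteps any bookkeeping about compatibility of abutment filtrations, and the same long-exact-sequence trick is reused verbatim in the proof of \cref{prop:plecfiltration} (there applied to $0 \to \mathcal{C} \cap \mathcal{D} \to \mathcal{C} \oplus \mathcal{D} \to \mathcal{C} + \mathcal{D} \to 0$), so it dovetails with what follows. Your spectral-sequence-comparison argument is equally valid and perhaps more conceptually transparent about \emph{why} the map is injective at the level of graded pieces.
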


 \begin{proof}
  The vanishing outside degree $d$ follows by applying the hypercohomology spectral sequence to $\Fil^S \BGG$ and using \cref{prop:vanish}, exactly as for the full complex $\BGG$. The same argument applied to the quotient complex $\BGG / \Fil^S$ shows that this also has vanishing hypercohomology outside degree $d$. Since we have a long exact sequence of hypercohomology
  \[ \dots \to \HH^{d-1}(X, \BGG / \Fil^S) \to \HH^d(X, \Fil^S \BGG) \to \HH^d(X, \BGG) \to \dots, \]
  this implies that $\HH^d(X, \Fil^S \BGG) \to \HH^d(X, \BGG)$ is injective on the $f$-generalised eigenspace.
 \end{proof}

 \begin{definition}
  For $S \in I$, we define $\Fil^S D_p(f)$ to be the image of the injection $\HH^d(X, \Fil^S \BGG)\{f\} \into D_p(f)$.
 \end{definition}

 \begin{proposition}\label{prop:plecfiltration}
  The collection of subspaces $\Fil^S D_p$ for $S \subseteq \{1, \dots, d\}$ forms a decreasing $I$-filtration of $D_p$. Moreover, for each $S \subseteq \{1, \dots, d\}$, the graded piece
  \[ \Gr^S D_p \coloneqq \Fil^S D_p / \Big(\sum_{T \supsetneq S} \Fil^T D_p\Big) \]
  is canonically isomorphic to $H^{d - |S|}(X, \omega_S)\{f\}$, and in particular is 1-dimensional.
 \end{proposition}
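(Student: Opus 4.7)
The plan is to transfer both the $I$-filtration property and the graded-piece identification from the complex $\BGG$ down to its hypercohomology, exploiting that, after passing to the $f$-generalised eigenspace, the hypercohomology spectral sequence of every ``nice'' subcomplex degenerates for degree reasons.

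First I would observe that every subcomplex of $\BGG$ of the form $\bigoplus_{T \in \Sigma} \omega_T[-|T|]$ for some $\Sigma \subseteq I$ — and in particular each $\Fil^S\BGG$, as well as the termwise sums and intersections $\Fil^S\BGG + \Fil^T\BGG$ and $\Fil^S\BGG \cap \Fil^T\BGG$ — has the property that its $f$-generalised eigenspace in $H^j(X,\omega_T)$ vanishes unless $j = d - |T|$, by \cref{prop:vanish}. Hence the hypercohomology spectral sequence for such a complex (after $\{f\}$) is concentrated on the antidiagonal $i + j = d$ at $E_1$, degenerates there, and produces a group that is nonzero only in total degree $d$. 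Consequently the functor $C \mapsto \HH^d(X,C)\{f\}$ is exact when restricted to the full subcategory of such subcomplexes (and their quotients): from any short exact sequence of such complexes the long exact sequence in hypercohomology collapses to a short exact sequence of $L$-vector spaces.

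With exactness in hand, applying $\HH^d(-)\{f\}$ to the termwise-split short exact sequences
\[ 0 \to \Fil^S \BGG \cap \Fil^T \BGG \to \Fil^S \BGG \oplus \Fil^T \BGG \to \Fil^S \BGG + \Fil^T \BGG \to 0 \]
shows that the operations of sum and intersection of the subspaces $\HH^d(X, \Fil^S\BGG)\{f\} \into D_p(f)$ coincide with the hypercohomology images of the sums and intersections of the complexes $\Fil^S\BGG$. The distributivity identity of \cref{prop:distrib} has already been verified at the level of complexes, so it descends at once to the collection $(\Fil^S D_p)_{S \in I}$; this establishes the $I$-filtration property.

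For the graded pieces, I would compute the quotient complex $\Fil^S \BGG \big/ \sum_{T \supsetneq S} \Fil^T \BGG$ termwise: its degree-$i$ part is $\bigoplus_{|T|=i,\, T \supseteq S} \omega_T$ modulo $\bigoplus_{|T|=i,\, T \supsetneq S} \omega_T$, which is $\omega_S$ for $i = |S|$ and $0$ otherwise. Hence the quotient is concentrated in a single degree and its $\{f\}$-hypercohomology in degree $d$ equals $H^{d - |S|}(X, \omega_S)\{f\}$, one-dimensional by \cref{prop:vanish}. Applying the exact functor $\HH^d(-)\{f\}$ to the short exact sequence
\[ 0 \to \sum_{T \supsetneq S} \Fil^T \BGG \to \Fil^S \BGG \to \Fil^S \BGG\big/\!\!\sum_{T \supsetneq S}\Fil^T\BGG \to 0 \]
and using that sums are preserved yields the canonical identification $\Gr^S D_p(f) \cong H^{d-|S|}(X, \omega_S)\{f\}$. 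The only subtle point — and really the one step that requires care — is the justification of termwise exactness of $\HH^d(-)\{f\}$; everything else is bookkeeping with coordinate-like filtrations on a graded object.
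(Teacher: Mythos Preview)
Your proof is correct and follows essentially the same approach as the paper: both arguments use \cref{prop:vanish} to see that every subcomplex in the lattice generated by the $\Fil^S\BGG$ has $\{f\}$-hypercohomology concentrated in degree $d$, then apply the long exact sequence associated to $0 \to \mathcal{C}\cap\mathcal{D} \to \mathcal{C}\oplus\mathcal{D} \to \mathcal{C}+\mathcal{D} \to 0$ to show that $\HH^d(X,-)\{f\}$ preserves sums and intersections, so distributivity descends and $\Gr^S D_p$ is computed from the single-term complex $\omega_S[-|S|]$. The only cosmetic difference is that you phrase this as exactness of a functor on a subcategory, while the paper phrases it as a lattice morphism from subcomplexes to subspaces.
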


 \begin{proof}
  Let $\Lambda$ be the lattice of subcomplexes of $\BGG$ generated by the $\Fil^S \BGG$ under the operations of sum and intersection. From \cref{prop:distrib}, we know that $\Lambda$ is a distributive lattice. We claim that the map from $\Lambda$ to the lattice of subspaces of $D_p(f)$, given by mapping $\mathcal{C}$ to the image of $\HH^d(X, \mathcal{C})\{f\}$ in $D_p(f)$, respects the lattice operations of sum and intersection.

  Any object $\mathcal{C}$ of $\Lambda$ has the property that, for each $i$, the $i$-th term $\mathcal{C}^i$ is a direct sum of some collection of $\omega_U$'s with $|U| = i$. Hence, by the same argument as above, $\HH^i(X, \mathcal{C})\{f\}$ vanishes outside degree $d$, and its degree $d$ part injects into $D_p(f)$.

  So, if we consider two objects $\mathcal{C}$, $\mathcal{D}$ of $\Lambda$, and form the long exact hypercohomology sequence associated to the exact sequence of complexes
  \[ 0 \to \mathcal{C} \cap \mathcal{D} \to \mathcal{C} \oplus \mathcal{D} \to \mathcal{C} + \mathcal{D} \to 0, \]
  we obtain a short exact sequence of $\{f\}$-parts in degree $d$
  \[ 0 \to \HH^d(X, \mathcal{C} \cap \mathcal{D})\{f\} \to
  \HH^i(X, \mathcal{C})\{f\} \oplus  \HH^d(X, \mathcal{D})\{f\} \to  \HH^d(X, \mathcal{C} + \mathcal{D})\{f\} \to 0. \]
  The exactness at the middle and left terms gives compatibility with intersections, and the exactness at the right gives compatibility with sums, proving the claim.

  It follows that the lattice of subspaces of $D_p(f)$ generated by $\Fil^S D_p(f)$ is exactly the image of $\Lambda$ under a morphism of lattices; thus it is distributive, since $\Lambda$ is. By the converse direction of \cref{prop:distrib}, $\left(\Fil^S D_p(f)\right)_{S \in I}$ is an $I$-filtration.

  This also shows that the $S$-th graded piece of $D_p(f)$ is given by applying $\HH^d(X, -)\{f\}$ to $\Gr^S \BGG$. Since $\Gr^S \BGG$ is the single term $\omega_S$ in degree $|S|$, this is just $H^{d-|S|}(X, \omega_S)\{f\}$.
 \end{proof}

 Since all graded pieces of this $I$-filtration have dimension 1, we in particular deduce that for every $S$ we have $\dim \Fil^S D_p(f) = \# \{ T \in I : T \supseteq S\} = 2^{d - |S|}$. This completes the description of the $I$-filtration invoked in the previous section.

 \begin{remark}
  We have worked over the $p$-adic field $L$ since this is the setting of \cref{main}; but the arguments of this section are also valid if we replace $L$ with a number field $E$ containing the Galois closure $F^{\mathrm{Gal}}$ of $F$ and the Hecke eigenvalues of $f$, giving an $I$-filtration of $D_E(f) = H^d_{\dR}(Y_E, \cL_{\mu, \dR})\{f\}$ in the category of $E$-vector spaces. (Note that the assumption that $E$ contain $F^{\mathrm{Gal}}$ is necessary here, since the action of $F^{\mathrm{Gal}} / F$ does not preserve the individual summands in the BGG complex, but rather permutes them according to the action of the Galois group on $\{1, \dots, d\}$; so if the coefficients of $f$ lie in $\QQ$, then $D_{\QQ}(f)$ makes sense as $\ZZ$-filtered vector space over $\QQ$, but it only acquires an $I$-filtration after base-extension to $F^{\mathrm{Gal}}$.)
 \end{remark}

\section{Proof of \cref{thm:main}, I: geometric Jacquet-Langlands}
 \label{sect:mainproof}

 \subsection{The \texorpdfstring{$S$}{S}-ordinary locus} We now embark on the proof of the theorem. Since $p$ is coprime to $\mathfrak{N}$ and unramified in $F$, the varieties $X$ and $Y$ have canonical smooth models over $\mathcal{O}_L$ (compatible with the embedding $Y \into X$); we write $X_0$ and $Y_0$ for their special fibres.

  \begin{definition}
   Let $X^{S-\ord}_0$ denote non-vanishing locus of the partial Hasse invariants \cite[\S 3.2]{tianxiao16} for the primes $\pp_i$ with $i \in S$.
  \end{definition}

  This is the locus where the $\pp_i^\infty$-torsion of the universal semiabelian variety over $X_0$ is ordinary for all $i \in S$. We write $\cX^{S-\ord}$ for the tube of $X^{S-\ord}_0$ in the dagger analytification $\cX = X^{\mathrm{an}, \dagger}$, which is a dagger space over $L$.

 \subsection{The spectral sequence of Goren--Oort strata}

  The complement of $X_0^{S-\ord}$ in $X_0$ is a normal-crossing divisor $\bigcup_{i \in S} Z_i$, where $Z_i$ is the vanishing locus of the Hasse invariant at $\pp_i$. If we define $Z_T$, for each $T \subseteq S$, to be the intersection $\bigcap_{i \in T} Z_i$ (understood as $X_0$ if $T = \varnothing$), then each $Z_T$ is a smooth closed subvariety of codimension $|T|$ in $X_0$. These are (closures of) \emph{Goren--Oort strata} in $X_0$; see \cite[\S 1.3]{tianxiao16-GO}. We write $\mathcal{Z}_T$ for the tube of $Z_T$ in $\cX$.

  \begin{proposition}
   There is a spectral sequence
   \[ E_1^{ij} = \bigoplus_{T\subseteq S, |T| = i} \HH^j(\mathcal{Z}_T, \BGG) \Rightarrow \HH^{i + j}_c(\cX^{S-\ord}, \BGG).\]
  \end{proposition}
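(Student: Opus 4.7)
The plan is to construct this spectral sequence by filtering a complex computing $R\Gamma_c(\cX^{S-\ord}, \BGG)$ according to the combinatorics of the boundary stratification $\mathcal{Z} := \cX \setminus \cX^{S-\ord} = \bigcup_{i \in S} \mathcal{Z}_i$. The starting point is the excision distinguished triangle
\[ R\Gamma_c(\cX^{S-\ord}, \cF^\bullet) \to R\Gamma(\cX, \cF^\bullet) \to R\Gamma(\mathcal{Z}, \cF^\bullet|_\mathcal{Z}) \xrightarrow{+1} \]
valid for any complex of sheaves $\cF^\bullet$ on $\cX$.

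Next, I exploit the NCD structure of $\mathcal{Z}$ via the Mayer--Vietoris / \v{C}ech resolution
\[ (\imath_\mathcal{Z})_* \cF^\bullet|_\mathcal{Z} \;\xrightarrow{\ \sim\ }\; \operatorname{Tot}\Big[\bigoplus_{|T|=1} (\imath_T)_* \cF^\bullet|_{\mathcal{Z}_T} \to \bigoplus_{|T|=2} (\imath_T)_* \cF^\bullet|_{\mathcal{Z}_T} \to \cdots \to (\imath_S)_* \cF^\bullet|_{\mathcal{Z}_S}\Big], \]
where $\imath_T : \mathcal{Z}_T \hookrightarrow \cX$ denotes the inclusion. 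This resolution is obtained by induction on $|S|$ from the two-component exact sequence $0 \to \cF|_{\mathcal{Z}_1 \cup \mathcal{Z}_2} \to \cF|_{\mathcal{Z}_1} \oplus \cF|_{\mathcal{Z}_2} \to \cF|_{\mathcal{Z}_1 \cap \mathcal{Z}_2} \to 0$. Substituting into the excision triangle with $\cF^\bullet = \BGG$, one sees that $R\Gamma_c(\cX^{S-\ord}, \BGG)$ is computed by the total complex of the bicomplex whose $i$-th column is
\[ \bigoplus_{T \subseteq S,\, |T|=i} R\Gamma(\mathcal{Z}_T, \BGG|_{\mathcal{Z}_T}), \]
with $i=0$ giving $R\Gamma(\cX, \BGG)$ (corresponding to $\mathcal{Z}_\varnothing = \cX$). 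The spectral sequence of the filtration by horizontal degree is precisely the spectral sequence of the proposition.

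The main technical point is justifying the Mayer--Vietoris resolution for $\mathcal{Z}$ in the dagger analytic category. Over schemes this is standard: the two-component sequence is exact for a reduced NCD pair, remains so after tensoring with any flat sheaf (in particular a line bundle), and the multi-component statement follows by induction on the number of components. The analogous statement for the tubes $\mathcal{Z}_T$ transfers from $X_0$ because the tube functor commutes with finite intersections of smooth closed subvarieties meeting transversally inside the smooth ambient space $\cX$, so the NCD combinatorics of the $Z_i$ faithfully transplants to the $\mathcal{Z}_i$. Alternative strategies include invoking the six-functor formalism for rigid/dagger analytic spaces, or transplanting the Goren--Oort spectral sequence arguments of \cite{tianxiao16-GO}.
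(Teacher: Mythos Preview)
Your proposal is correct and arrives at the same double complex as the paper's proof. The organization differs slightly: the paper proceeds by induction on $|S|$, writing $S = \{a\} \sqcup S'$ and using the composition $R\Gamma_{\cX^{S-\ord}} = R\Gamma_{\cX^{\{a\}-\ord}} \circ R\Gamma_{\cX^{S'-\ord}}$ of sections-with-support functors, so that the exact triangle for $R\Gamma_{\cX^{\{a\}-\ord}}$ identifies $R\Gamma_{\cX^{S-\ord}}(\BGG)$ with the mapping fibre of $C_{S'} \to C_{S'}|_{\mathcal{Z}_{\{a\}}}$, which is your total complex $C_S$. Your route---one global excision followed by a \v{C}ech/Mayer--Vietoris resolution of the boundary---is equivalent, but requires the separate check (which you flag) that tubes behave well under the intersections $Z_T = \bigcap_{i \in T} Z_i$. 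The paper's iterated-support approach sidesteps this by reducing to a single-component excision triangle at each inductive step, so is marginally cleaner in the dagger setting; your approach has the advantage of being immediately recognizable as the standard normal-crossing-divisor spectral sequence.
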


  \begin{proof}
   As in \cite[\S 5.2]{lestum07}, we have a left exact functor $\Gamma_{\cX^{S-\ord}}(-)$, ``sections with support in $\cX^{S-\ord}$'', on the category of abelian sheaves on $\cX$, and the derived functor $R\Gamma_{\cX^{S-\ord}}(-)$ fits into an exact triangle
   \[ R\Gamma_{\cX^{S-\ord}}(\mathcal{E}) \to \mathcal{E}\to \mathcal{E} |_{(\cX - \cX^{S-\ord})} \to [+1]\]
   for any abelian sheaf (or complex of sheaves) $\mathcal{E}$.

   We are interested in the object $R\Gamma_{\cX^{S-\ord}}\left(\BGG\right)$ of the derived category, whose hypercohomology is by definition $\HH^*_{c}\left(\cX^{S-\ord}, \BGG \right)$. We claim this object is isomorphic to the total complex of the double complex (which we denote by $C_S$)
   \[ \BGG \to \bigoplus_{\substack{T \subseteq S \\ |T| = 1}} \BGG|_{\mathcal{Z}_T} \to \bigoplus_{\substack{T \subseteq S \\ |T| = 2}}\BGG|_{\mathcal{Z}_T} \to \dots, \]
   where the morphisms are alternating sums of restriction maps. There is nothing to prove if $S = \varnothing$, so let us proceed by induction on $|S|$, and write $S = \{a\} \sqcup S'$ for some $a$. Then we have $\cX^{S-\ord} = \cX^{\{a\}-\ord} \cap \cX^{S'-\ord}$, so $R\Gamma_{\cX^{S-\ord}} = R\Gamma_{\cX^{\{a\}-\ord}} \circ R\Gamma_{\cX^{S'-\ord}}$. The exact for the functor $R\Gamma_{\cX^{\{a\}-\ord}}$ applied to the object $R\Gamma_{\mathcal{X}^{S'-\ord}}\left(\BGG\right)$ reads
   \[ R\Gamma_{\mathcal{X}^{S-\ord}} \left(\BGG\right) \to R\Gamma_{\mathcal{X}^{S'-\ord}} \left(\BGG\right)\to \left(R\Gamma_{\mathcal{X}^{S'-\ord}}\left(\BGG\right)\right)|_{\mathcal{Z}_{\{a\}}} \to [+1].\]
   Applying the induction hypothesis for $S'$, we obtain an identification of $R\Gamma_{\mathcal{X}^{S-\ord}} \left(\BGG\right)$ with the mapping fibre of $C_{S'} \to C_{S'} |_{\mathcal{Z}_{\{a\}}}$, which is easily seen to be isomorphic to $C_S$, proving the claim. Applying the hypercohomology functor now gives the claimed spectral sequence.
  \end{proof}

  \begin{proposition}\label{prop:rigcoh-iso}
   The natural pushforward map
   \[ \HH^d_{c}\left(\cX^{S-\ord}, \BGG \right)\{f\} \to \HH^d\left(\cX, \BGG \right)\{f\} = D_p \]
   is a bijection.
  \end{proposition}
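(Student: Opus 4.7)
The plan is to apply the spectral sequence of \cref{prop:rigcoh-iso}'s immediately preceding proposition and show that, on the $\{f\}$-generalized eigenspace, only the $T = \varnothing$ column contributes to the abutment in total degree $d$. Since $\mathcal{Z}_\varnothing = \cX$, the zeroth column gives $E_1^{0, j} = \HH^j(\cX, \BGG)$, and the natural pushforward map in the statement factors as the edge map $\HH^d_c \twoheadrightarrow E_\infty^{0, d} \hookrightarrow E_1^{0, d} = \HH^d(\cX, \BGG)$. So the proposition is equivalent to two statements on $\{f\}$-parts: first, that no differential $d_r \colon E_r^{0, d} \to E_r^{r, d-r+1}$ is nonzero; and second, that $E_\infty^{i, d-i} = 0$ for every $i \ge 1$.

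Both follow from the vanishing $\HH^j(\mathcal{Z}_T, \BGG|_{\mathcal{Z}_T})\{f\} = 0$ for every nonempty $T \subseteq S$ and every $j$ that could contribute, i.e.\ at least for $j = d - |T|$ together with the nearby degrees reached by the differentials. For this I would invoke the geometric Jacquet--Langlands description of Goren--Oort strata of Tian--Xiao, which presents each $\mathcal{Z}_T$ (up to an iterated $\mathbf{P}^1$-fibration) as a quaternionic Shimura variety $\mathrm{Sh}(B_T)$, where $B_T/F$ is the quaternion algebra ramified at the infinite places indexed by $T$; the identification is Hecke-equivariant away from $p\mathfrak{N}$ via Jacquet--Langlands transfer. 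The analogue of \cref{prop:vanish} for $\mathrm{Sh}(B_T)$ concentrates the $\{f\}$-isotypic part of the relevant coherent cohomology in a single degree, and feeding this through the Leray spectral sequence of the $\mathbf{P}^1$-fibration together with the BGG complex on $\mathcal{Z}_T$ gives the required vanishing.

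An alternative strategy, which may be cleaner, is to work one line bundle at a time: prove that the map $\HH^j_c(\cX^{S-\ord}, \omega_U)\{f\} \to \HH^j(\cX, \omega_U)\{f\}$ is an isomorphism for each $U \subseteq \{1, \dots, d\}$ and each $j$. By \cref{prop:vanish} the target is one-dimensional if $j = d - |U|$ and vanishes otherwise, so this reduces via the long exact sequence for the open--closed decomposition $\cX = \cX^{S-\ord} \sqcup \bigcup_{i \in S} \mathcal{Z}_i$ and Mayer--Vietoris for the closed covering to a coherent-cohomology vanishing on the strata, again handled by Tian--Xiao plus the quaternionic analogue of \cref{prop:vanish}. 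Assembling these line-bundle isomorphisms through the BGG spectral sequence on both sides yields the proposition.

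The main obstacle will be the careful bookkeeping of the various degree shifts --- the BGG position, the $\mathbf{P}^1$-Leray contributions, and the codimension of the strata --- so as to ensure that every nonzero $\{f\}$-contribution from a boundary stratum lies strictly outside the range that could affect total degree $d$ of the abutment. This is essentially a combinatorial check, but it must be done carefully, because \emph{a priori} dimension counts (e.g.\ comparing the middle-degree $\{f\}$-eigenspace on $\mathrm{Sh}(B_T)$ of dimension $2^{d-|T|}$ to the expected dimension $2^d$ on the abutment) show the $E_1$ page is genuinely larger than the abutment, so genuine cancellation via the Leray and/or spectral-sequence differentials is required.
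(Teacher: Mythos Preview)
Your overall architecture --- use the stratification spectral sequence and show that the $\{f\}$-parts of the $E_1^{ij}$ vanish for $i > 0$ --- is exactly what the paper does. The gap is in your description of the Goren--Oort strata. Because $p$ is \emph{totally split} in $F$, Tian--Xiao's theorem identifies $Z_T$ (over $\overline{\FF}_p$) \emph{directly} with the special fibre of the Shimura variety for the quaternion algebra $B_T/F$ ramified at the finite primes $\pp_i$ \emph{and} the infinite places $\sigma_i$, for $i \in T$; there is no $\mathbf{P}^1$-fibration here. (The iterated $\mathbf{P}^1$-bundle picture you have in mind arises when $p$ has residue degree $>1$ at some prime of $F$.) The ramification of $B_T$ at the finite primes is the whole point: by Jacquet--Langlands, automorphic representations of $B_T^\times$ correspond to Hilbert cusp forms for $\GL_2/F$ whose local component at each $\pp_i$ with $i \in T$ is square-integrable, hence new of level divisible by $\prod_{i\in T}\pp_i$. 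Since $f$ is new of level $\mathfrak{N}$ coprime to $p$, it has no transfer to $B_T^\times$ for $T \ne \varnothing$, and the entire $\{f\}$-eigenspace in $H^*_{\rig}(Z_T, \cL_{\mu,\rig})$ vanishes. Thus $E_1^{ij}\{f\} = 0$ for all $i > 0$ and all $j$, and the edge map is an isomorphism immediately; no degree bookkeeping or cancellation is needed.

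Under your description ($B_T$ ramified only at infinity), $f$ \emph{would} transfer, and the $\{f\}$-eigenspace on each stratum would be nonzero in middle degree. You correctly sense that the $E_1$ page would then be strictly larger than the abutment, but the ``genuine cancellation'' you hope for has no visible mechanism: the $d_1$ differentials here are alternating sums of restriction maps between strata, and there is no reason to expect them to be injective on the $\{f\}$-part. So this is not merely an uncompleted combinatorial check --- with the wrong quaternion algebra the argument would actually fail. Your alternative line-bundle-by-line-bundle strategy runs into the same problem for the same reason.
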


  \begin{proof}
   For $T \ne \varnothing$, the variety $Z_T$ is a smooth closed subvariety of $X_0$ contained in $Y_0$; so the restriction of $\BGG$ to $\mathcal{Z}_T$ is the de Rham complex of $\cL_{\mu, \dR}$, and hence $\HH^i\left(\mathcal{Z}_T, \BGG\right)$ is the \emph{rigid cohomology} of $Z_T$ with coefficients in the $F$-isocrystal $\cL_{\mu, \mathrm{rig}}$ corresponding to $\cL_{\mu, \mathrm{dR}}$, cf.~\cite[\S 4.5]{tianxiao16}. (This can be extended to $T = \varnothing$ if we interpret $\cL_{\mu, \mathrm{rig}}$ as a `log $F$-isocrystal', to account for the logarithmic singularity of the connection along the boundary $X - Y$; but this is not essential for our arguments, since for $T = \varnothing$ we already have an interpretation of $\HH^i\left(\mathcal{Z}_T, \BGG\right)$ as the de Rham cohomology of the algebraic variety $Y_L$.)

   By general properties of rigid cohomology, each term $E_1^{ij}$ is finite-dimensional, and the entire spectral sequence has a natural action of the prime-to-$p\mathfrak{N}$ Hecke algebra; so we may pass to $f$-generalised eigenspaces to obtain a spectral sequence
   \[ \bigoplus_{T \subseteq S, |T| = i} H^j_{\mathrm{rig}}\left(Z_{T}, \cL_{\mu, \mathrm{rig}}\right)\{f\} \Rightarrow \HH^{i+j}_{c}\left(\cX^{S-\ord}, \BGG\right)\{f\}. \tag{$\star$}\]

   The main result of \cite{tianxiao16-GO} (Theorem 5.2 of \emph{op.cit.}; see also \cite[\S 5.21]{tianxiao16}) is that the $Z_T$ are themselves Shimura varieties: after base-extending to $\overline{\FF}_p$, we can identify $Z_T$ with the special fibre of the Shimura variety associated to the quaternion algebra $B_T$ over $F$ ramified at the finite places $\pp_i$ and the infinite places $\sigma_i$, for $i \in T$. Moreover, this identification is compatible with Hecke correspondences away from $p$. Using part (2) of \cite[Theorem 5.8]{tianxiao16-GO}, one can also check that the restriction of $\cL_{\mu, \mathrm{rig}}$ to $Z_T$ is the $F$-isocrystal associated to the weight $\mu$ algebraic representation of $B_T^\times$.

   The rigid cohomology of each $Z_T$ can therefore be computed (as a module for the prime-to-$p$ Hecke algebra) in terms of automorphic representations of $B_T^\times$ of level $\mathfrak{N}$ and weight $\mu$. These are precisely the Jacquet--Langlands transfers to $B_T^\times$ of Hilbert modular forms for $\GL_2 / F$ which are new of level $\mathfrak{N}\cdot \prod_{i \in T} \pp_i$. Since $f$ is new of level $\mathfrak{N}$, it follows that for $T \ne \varnothing$, the $f$-generalised eigenspaces in these cohomology groups are zero. Thus the $E_1^{ij}$ terms in the spectral sequence $(\star)$ are zero for $i \ne 0$, and hence the edge maps in the $i = 0$ column are isomorphisms (for all $j$, and in particular for $j = d$).
  \end{proof}

  \begin{remark}
   This is essentially the same argument as the main theorem of \cite{tianxiao16}. More precisely our setting is the ``Poincar\'e dual'' of theirs: in \emph{op.cit.}, they compute the rigid cohomology of $\cX^{S-\ord}$ (for $S$ the whole set $\{1, \dots, d\}$) with compact support towards the toroidal boundary divisor $X - Y$ but non-compact support towards the $Z_i$, while we compute the cohomology with compact support towards the $Z_i$ and non-compact support towards the toroidal boundary.
  \end{remark}

  \subsection{A new filtration} From the decomposition of the BGG complex we obtain a new filtration on $D_p$:

  \begin{notation}
   For $T \subseteq \{1, \dots, d\}$, define $\sF^T D_p$ to be the $f$-generalised eigenspace in the space
    \[ \operatorname{image}\Big(\HH^d_{c}\left(\cX^{S-\ord}, \Fil^T \BGG \right) \to
   \HH^d_{c}\left(\cX^{S-\ord}, \BGG \right)\Big). \]
  \end{notation}

  Note that \emph{a priori} the subspaces $\sF^T$ only define an ``$I$-prefiltration'' in the sense of \cite{NS-plectic1}; it is not clear if it is a $\ZZ^d$-filtration (or even a weak $\ZZ^d$-filtration). Moreover, it is clear that there are inclusions
  \[ \sF^T D_p \subseteq \Fil^T D_p, \]
  since the composite map $\HH^i_c(\cX^{S-\ord}, \Fil^T \BGG) \to \HH^i_c(\cX^{S-\ord}, \BGG) \to  \HH^i(\cX, \BGG)$ factors through $\HH^i(\cX, \Fil^T \BGG)$; but it is far from obvious \emph{a priori} if equality holds. On the other hand, we have a new piece of information: the partial Frobenii $\varphi_i$ for $i \in S$ admit liftings to $\cX^{S-\ord}$, and the coefficient sheaves are compatible with these liftings. Hence, for each $T \in \{1, \dots, d\}$, the subspace $\sF^T D_p(f)$ is invariant under the $\varphi_i$ for $i \in S$ (while we have no reason to expect the $\Fil^T D_p(f)$ to be invariant under these operators).

\section{Proof of \cref{thm:main}, II: higher Coleman theory}

  \subsection{Higher Coleman theory: statements}

   In order to better understand the prefiltration $(\mathscr{F}^T D_p)_{T \in I}$, we now use methods from the \emph{higher Coleman theory} introduced in \cite{boxerpilloni20} to study the groups $H^*_c(\cX^{S-\ord}, \omega_T)$. We begin by stating the two key results we need, whose proofs we shall explain in the remainder of this section.

  \begin{proposition}
   \label{prop:compact}
   For each $T \subseteq \{1, \dots, d\}$ and each $n$, the operator $\varphi_S = \prod_{i \in S} \varphi_i$ on $H^n_c\left(\cX^{S-\ord}, \omega_T \right)$ is potentially compact (i.e.~there is some $k \ge 1$ such that $\varphi_S^k$ is compact).
  \end{proposition}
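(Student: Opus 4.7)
The plan is to realise $\varphi_S$ as a cohomological operator coming from a correspondence that strictly shrinks the tube $\cX^{S-\ord}$, and then invoke the standard completely-continuous machinery for cohomology of dagger spaces.

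First I would recall from \cite[\S 4.6]{tianxiao16} that on the $S$-ordinary locus $X_0^{S-\ord}$ the partial Frobenius $\varphi_i$ ($i\in S$) is defined by quotienting the universal HBAV by the kernel of Frobenius at $\pp_i$. Over the $S$-ordinary locus this kernel is a connected $\pp_i$-torsion subgroup, which admits a canonical lift to the rigid tube. Thus each $\varphi_i$ (for $i\in S$) lifts to an endomorphism of $\cX^{S-\ord}$ as a dagger space, and the automorphic line bundles $\omega_T$ (built from characters of the torus) carry a natural equivariant structure for this lift, giving the pullback operator $\varphi_i^{*}$ on cohomology.

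Next I would exhibit the contraction. Write $\cX^{S-\ord}$ as the increasing union of strict neighbourhoods $\cX^{S-\ord}_{\underline{\epsilon}}$ cut out by bounds $v(\mathrm{Ha}_i)\le\epsilon_i$ on the valuations of the partial Hasse invariants for $i\in S$. The crucial geometric fact, whose proof mirrors the classical canonical-subgroup calculation on modular curves, is that $\varphi_i$ divides $v(\mathrm{Ha}_i)$ by $p$ and leaves the other $v(\mathrm{Ha}_j)$ ($j\ne i$) unchanged; hence $\varphi_S$ sends $\cX^{S-\ord}_{\underline{\epsilon}}$ into $\cX^{S-\ord}_{\underline{\epsilon}/p}$. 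Iterating enough times produces a factorisation of $\varphi_S^{k}$ through a quasi-compact dagger subspace sitting strictly inside $\cX^{S-\ord}_{\underline{\epsilon}}$.

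Finally I would apply the abstract compactness criterion: an endomorphism of the cohomology of a coherent sheaf on a dagger space which factors through restriction along a relatively compact inclusion of strict neighbourhoods is a compact operator. This is the classical completely-continuous formalism (Serre, Grosse-Kl\"onne), formulated in the Shimura-variety setting with partial supports in \cite{boxerpilloni20}; the compactly-supported version needed here is dual to the usual global-sections statement, and for $i\in S$ the lift of $\varphi_i$ is itself finite, so there is no issue extending the operator to the compactly-supported theory. Combining with the previous step gives compactness of $\varphi_S^{k}$ for $k\gg 0$, which is the definition of potential compactness. The main obstacle is the quantitative dynamical analysis in paragraph two: verifying cleanly that the lift of $\varphi_i$ has the predicted effect on the partial Hasse-invariant stratification, uniformly including near the toroidal boundary $X-Y$ where the automorphic bundles $\omega_T$ are only extended with logarithmic behaviour and the tube geometry is more delicate. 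Once that shrinking is in hand, the rest is formal.
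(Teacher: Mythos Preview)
Your overall strategy---realising $\varphi_S$ via a lift that moves strict neighbourhoods of the $S$-ordinary tube and then invoking a complete-continuity criterion---is indeed what underlies the result, but the argument as written has a genuine gap, and the paper takes a different route to avoid exactly this point.

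The concrete error is in the dynamics. Quotienting by the canonical (multiplicative) subgroup at $\pp_i$, which is what your lift of $\varphi_i$ does, \emph{multiplies} $v(\mathrm{Ha}_i)$ by $p$ rather than dividing it: one has $\varphi_i^*\mathrm{Ha}_i \equiv \mathrm{Ha}_i^{\,p}$ modulo $p$, so for $x$ close to ordinary, $v(\mathrm{Ha}_i(\tilde\varphi_i(x))) = p\cdot v(\mathrm{Ha}_i(x))$. (It is the quotient by a \emph{non}-canonical subgroup---the $U_{\pp_i}$ direction---that divides $v(\mathrm{Ha}_i)$ by $p$.) Thus $\tilde\varphi_S$ sends $\cX^{S-\ord}_{\underline{\epsilon}}$ into the \emph{larger} neighbourhood $\cX^{S-\ord}_{p\underline{\epsilon}}$, not a smaller one. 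Your ``factorisation through a relatively compact inclusion'' criterion is the mechanism for compactness on ordinary cohomology $H^*$; for $H^*_c$, which is a direct limit over shrinking rigid neighbourhoods, one needs the opposite dynamical input, and the compactness comes from the transition maps of that colimit rather than from restriction. Your appeal to ``the compactly-supported version is dual to the global-sections statement'' is exactly the step the paper warns cannot be taken for granted: the authors explicitly note that Serre duality is not known for cohomology of non-affinoid dagger spaces, so one cannot simply dualise.

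The paper sidesteps all of this. It first reformulates at parahoric level: the canonical-subgroup section gives an isomorphism $\cX^{S-\ord}\cong \cX_S^{S-\mul}$, under which $\varphi_i$ becomes the Hecke operator $U'_{\pp_i}$ (double coset of $\left(\begin{smallmatrix}1&\\&\varpi_i\end{smallmatrix}\right)$). Potential compactness of $U'_S=\prod_{i\in S}U'_{\pp_i}$ on $H^n_c(\cX_S^{S-\mul},\omega_T)$ is then read off directly from Boxer--Pilloni's Theorem~5.13(2) applied with Kostant representative $w=\mathrm{id}$ (so that their cohomology-with-support is exactly the compactly-supported cohomology in question). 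For $S=\{1,\dots,d\}$ this is a literal citation; for general $S$ one replaces the full Hodge--Tate period map by the projection onto the factors of $(\mathbf{P}^1)^d$ indexed by $S$ and reruns the same machine. This buys a clean argument that does not require analysing the Hasse-invariant dynamics by hand, and in particular does not rely on any unproven duality for dagger-space cohomology.
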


  It follows that the subspace $H^n_c\left(\cX^{S-\ord},\omega_T\right)\{(\alpha_i)_{i \in S}\}$, defined as the maximal subspace on which the operators $\varphi_i - \alpha_i$ for $i \in S$ are all nilpotent, is finite-dimensional. So we may in particular decompose it as a direct sum of generalised eigenspaces for the prime-to-$p\mathfrak{N}$ Hecke operators; we write $H^n_c(\dots)\{(\alpha_i)_{i \in S}, f\}$ for the summand corresponding to $f$.

  \begin{proposition} \label{prop:classicity} Let $T \subseteq \{1, \dots, d\}$. Then:
   \begin{itemize}
    \item If $S \cap T \ne \varnothing$, then $H^n_c\left(\cX^{S-\ord},\omega_T\right)\{(\alpha_i)_{i \in S}, f\}  = 0$ for all $n$.
    \item If $S \cap T = \varnothing$, then $H^n_c\left(\cX^{S-\ord},\omega_T\right)\{(\alpha_i)_{i \in S}, f\} = 0$ for $n \ne d - |T|$.
    \item If $S \cap T = \varnothing$ and $n = d - |T|$, then $H^n_c\left(\cX^{S-\ord},\omega_T\right)\{(\alpha_i)_{i \in S}, f\}$ is 1-dimensional, and the natural ``forget supports'' map is an isomorphism
    \[
    H^{d - |T|}_c\left(\cX^{S-\ord}, \omega_T\right)\{(\alpha_i)_{i \in S}, f\}
    \xrightarrow{\ \cong\ }
    H^{d - |T|}\left(\cX, \omega_T\right)\{f\}.
    \]
   \end{itemize}
  \end{proposition}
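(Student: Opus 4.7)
The plan is to apply higher Coleman theory in the style of \cite{boxerpilloni20}: the essential input is a Newton slope lower bound for each partial Frobenius $\varphi_i$ (with $i \in S$) acting on $H^*_c(\cX^{S-\ord}, \omega_T)$, together with a companion estimate on strict neighbourhoods of the Hasse loci $\mathcal{Z}_i$. Concretely, $\varphi_i$ is realised as pullback along the canonical lift to $\cX^{S-\ord}$ of the endomorphism $A \mapsto A/A[\pp_i]^\circ$, and this pullback acts on sections of $\omega_T$ with an explicit divisibility governed by the $i$-th component of the weight $\kappa_T$. The resulting bound is $v_p(\varphi_i) \ge t_i + k_i + 1$ when $i \in T$ and $v_p(\varphi_i) \ge t_i$ when $i \notin T$, and it increases strictly when one restricts to a strict neighbourhood of $\mathcal{Z}_i$ (where the $\pp_i$-torsion of the universal semiabelian variety fails to be ordinary).

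Given this bound, the first bullet is immediate: if $i \in S \cap T$, every generalised $\varphi_i$-eigenvalue on $H^*_c(\cX^{S-\ord}, \omega_T)$ has $p$-adic valuation at least $t_i + k_i + 1$, which exceeds the strictly-small-slope bound $v_p(\alpha_i) < t_i + k_i$. Hence the generalised $\{(\alpha_i)_{i \in S}, f\}$-eigenspace vanishes in every degree.

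For the second and third bullets, suppose $S \cap T = \varnothing$ and consider the forget-supports morphism
\[ H^n_c(\cX^{S-\ord}, \omega_T) \to H^n(\cX, \omega_T). \]
Its mapping cone is expressed via the cohomology of $\omega_T$ on strict neighbourhoods of $\bigcup_{i \in S} \mathcal{Z}_i$; on this cone the boundary slope estimate forces every $\varphi_i$-eigenvalue ($i \in S$) to have slope strictly exceeding $t_i + k_i$, so the $\{(\alpha_i)_{i \in S}\}$-eigenspace of the cone vanishes. Consequently the forget-supports map is an isomorphism on the small-slope eigenspace. Combining with \cref{prop:vanish}, which gives $H^n(\cX, \omega_T)\{f\}$ one-dimensional when $n = d - |T|$ and zero otherwise, yields both the vanishing in the wrong degrees and the one-dimensionality in the correct one. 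The Eichler--Shimura congruence recalled in Section~1 verifies that on the classical line $\varphi_i$ acts by $\alpha_i$ for each $i \in S$ (since $i \notin T$ by hypothesis), so the small-slope eigenspace on the left coincides with the full classical $f$-eigenspace on the right.

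The hard part will be the boundary slope estimate: quantifying the extra $p$-divisibility acquired by the canonical lift of $\varphi_i$ over strict neighbourhoods of $\mathcal{Z}_i$, where the $\pp_i$-torsion becomes non-ordinary. This is the technical heart of higher Coleman theory in \cite{boxerpilloni20}, but transplanting it to the Hilbert--Tian--Xiao setting---with its independent partial Hasse invariants for each prime $\pp_i$, and for compactly-supported rather than interior cohomology---is where the bulk of the work lies.
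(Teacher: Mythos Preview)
Your overall strategy is in the right spirit, but there is a structural gap in your treatment of the second and third bullets. The partial Frobenius $\varphi_i$ is only defined as an endomorphism of $\cX^{S-\ord}$ (it is the canonical-subgroup lift, which does not exist over $\mathcal{Z}_i$). Consequently there is no natural action of $\varphi_i$ on $H^*(\cX, \omega_T)$, nor on the mapping cone of the forget-supports map, nor on the cohomology of strict neighbourhoods of the $\mathcal{Z}_i$. So your sentence ``on this cone the boundary slope estimate forces every $\varphi_i$-eigenvalue to have slope strictly exceeding $t_i + k_i$'' does not parse: there is nothing on which to take $\varphi_i$-eigenvalues. The same problem undermines the final step, where you compare the small-slope eigenspace on the left with the full $f$-eigenspace on the right: you need the map itself to be equivariant for \emph{some} operator on the target whose eigenvalues you can control.

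The paper repairs this by passing to parahoric level. One introduces the cover $\cX_S \to \cX$ with Iwahori level at each $\pp_i$ for $i \in S$, and identifies the $S$-multiplicative locus $\cX_S^{S-\mul}$ isomorphically with $\cX^{S-\ord}$. On $\cX_S$ the Hecke correspondences $U'_{\pp_i}$ are defined globally, and one checks that the restriction of $U'_{\pp_i}$ to $\cX_S^{S-\mul}$ coincides with $\varphi_i$. Now the forget-supports map $H^*_c(\cX_S^{S-\mul}, \omega_T) \to H^*(\cX_S, \omega_T)$ is $U'_{\pp_i}$-equivariant, and the target is computed by classical automorphic representation theory (the appropriate generalised eigenspace is one-dimensional in degree $d-|T|$ and maps isomorphically down to $H^{d-|T|}(\cX, \omega_T)\{f\}$). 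The comparison of source and target is then carried out not via a single mapping-cone argument but via a Bruhat-stratification spectral sequence with terms indexed by $U \subseteq S$ (the analogue of \cite[Theorem 5.15]{boxerpilloni20}, built using a \emph{partial} Hodge--Tate period map onto $(\mathbf{P}^1)^S$); the slope bounds of Boxer--Pilloni kill all terms except the one with $U = S \cap T^c$. Your ``boundary slope estimate'' is morally this, but it only becomes available once you have globally defined Hecke operators to which the estimate can be applied.
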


  Note that the target of the map in the last bullet point does \textit{not} have a natural action of the partial Frobenii.

 \subsection{Reformulation at parahoric level}

  \begin{definition}
   We let $Y_S \to Y$ be the Shimura variety of level $\{ g \in U_1(\mathfrak{N}) : g = \stbt{\star}{\star}{0}{\star} \bmod \pp_i\, \forall i \in S\}$.
  \end{definition}

  This is a Shimura variety of parahoric level at $p$, so it has a canonical regular $\Zp$-model; this is the moduli space classifying choices of cyclic $p$-subgroup $C_i \subseteq A[\pp_i]$ for each $i \in S$, where $A / Y$ is the universal Hilbert--Blumenthal abelian variety. By a suitable choice of the toroidal boundary data, we can (and do) assume that the natural map $Y_S \to Y$ extends to a map of toroidal compactifications $X_S \to X$, where $X_S$ is projective, and smooth in a neighbourhood of the cusps; and the $C_i$ extend to finite flat group schemes over $X_S$, via Mumford's construction.

  \begin{definition}
   For $T \subseteq S$, we let $X_{S, 0}^{T-\mul}$ denote the open subvariety of the special fibre $X_{S, 0}$ where the level subgroups $C_i$ are of multiplicative type for $i \in T$, and \'etale for $i \in S - T$. We write $\cX_S^{T-\mul}$ for the tube of $X_{S, 0}^{T-\mul}$ in the dagger space $\cX_S$.
  \end{definition}

  These subspaces are disjoint, and their union is the $S$-ordinary locus. The fully multiplicative subspace $\cX_S^{S-\mul}$ maps isomorphically to $\cX^{S-\ord}$, the $S$-ordinary locus at prime-to-$p$ level,  since over $\cX^{S-\ord}$, the $\pp_i$-torsion of $A$ has a unique multiplicative $p$-subgroup (the canonical subgroup).

  \begin{definition}
   Let $U_{\pp_i}'$, for each $i \in S$, be the Hecke correspondence given by the double coset of $\stbt{1}{}{}{\varpi_i} \in \GL_2(F_{\pp_i})$, where $\varpi_i$ is a uniformizer.
  \end{definition}

  \begin{remark}
   Note that $U'_{\pp_i}$ is not quite the same as the more familiar operator $U_{\pp_i}$ defined by the double coset of $\stbt{\varpi_i}{}{}{1} \in \GL_2(F_{\pp_i})$; it is $U_{\pp_i}$, not $U'_{\pp_i}$, which has a straightforward formula in terms of $q$-expansions at $\infty$. As operators on the coherent cohomology of the algebraic variety $X_{/L}$, we can describe $U'_{\pp_i}$ as the transpose of $U_{\pp_i}$ with respect to Serre duality; however, we cannot use this as a \emph{definition} of $U'_{\pp_i}$, since we shall shortly need to consider the action of this operator on the cohomology of the dagger spaces $\cX_S^{S-\mul}$, and we do not know if Serre duality holds for the cohomology of non-affinoid dagger spaces.
  \end{remark}

  \begin{lemma}
   The restriction of $U_{\pp_i}'$ to $\cX_S^{S-\mul}$ coincides, under the isomorphism $\cX_S^{S-\mul} \cong \cX^{S-\ord}$, with our partial Frobenius lifting $\varphi_i$.
  \end{lemma}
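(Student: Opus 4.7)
The plan is to unwind the moduli-theoretic description of the correspondence $U'_{\pp_i}$ and identify its restriction to $\cX_S^{S-\mul}$ with the graph of $\varphi_i$. The double coset $[\Gamma \stbt{1}{}{}{\varpi_i} \Gamma]$ at Iwahori level is realized by the pair of degeneracy maps from the Shimura variety classifying data $(A, (C_j)_{j \in S}, C_2)$, where $C_2$ is a cyclic subgroup of $A[\pp_i^2]$ of order $\pp_i^2$ satisfying $C_2[\pp_i] = C_i$ — one map forgetting $C_2$, the other induced by the isogeny $A \to A/C_i$. Unwinding gives
\[ U'_{\pp_i}(A, (C_j)_{j \in S}) \;=\; \sum_{C_2}\bigl(A/C_i,\, C_2/C_i,\, (C_j)_{j \ne i}\bigr), \]
with the sum running over the $p$ cyclic lifts $C_2$ of $C_i$. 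Via $C_2 \mapsto C_2/C_i$ these biject with the order-$\pp_i$ subgroups of $(A/C_i)[\pp_i]$ other than $A[\pp_i]/C_i$, since the preimage of the latter in $A$ is $A[\pp_i]$ itself, which is not cyclic of order $\pp_i^2$.

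Now restrict to $\cX_S^{S-\mul}$. By definition of this tube, the Iwahori subgroup $C_i$ coincides with the canonical subgroup $A[\pp_i]^\circ$, so $A/C_i = A/A[\pp_i]^\circ = \varphi_i(A)$ and is again ordinary at $\pp_i$. The subgroup $A[\pp_i]/C_i = A[\pp_i]/A[\pp_i]^\circ$ is étale, hence differs from the multiplicative canonical subgroup $(A/C_i)[\pp_i]^\circ$; so among the $p$ permitted values of $C_2/C_i$ exactly one equals $(A/C_i)[\pp_i]^\circ$. The corresponding branch of $U'_{\pp_i}$ is the unique branch whose target also lies in $\cX_S^{S-\mul}$, while the other $p - 1$ branches have $C_2/C_i$ étale in reduction and so land in the disjoint tube $\cX_S^{(S \setminus \{i\})-\mul}$. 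Since the tubes $\cX_S^{T-\mul}$ are disjoint open-closed subspaces of the $S$-ordinary locus, the restriction of the correspondence to $\cX_S^{S-\mul}$ (as both source and target) is the graph of the map $(A, A[\pp_i]^\circ, \ldots) \mapsto (A/A[\pp_i]^\circ, (A/A[\pp_i]^\circ)[\pp_i]^\circ, \ldots)$, which under the isomorphism $\cX_S^{S-\mul} \cong \cX^{S-\ord}$ is precisely $\varphi_i$.

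The main obstacle is nailing down the correct moduli description of $U'_{\pp_i}$, as opposed to the superficially similar $U_{\pp_i}$ (from $\stbt{\varpi_i}{}{}{1}$), whose moduli interpretation involves summing over quotients $A/D$ with $D \ne C_i$ — a correspondence which is genuinely multi-valued on the multiplicative tube and does \emph{not} lift Frobenius there. With this unwinding in hand, the remainder is a routine verification using the standard behaviour of the canonical subgroup under ordinary isogenies at $\pp_i$.
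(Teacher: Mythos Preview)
Your proof is correct and follows essentially the same moduli-theoretic approach as the paper: unwind the correspondence $U'_{\pp_i}$ as ``quotient by $C_i$, then choose a new $\pp_i$-level structure on $A/C_i$'', and observe that on the multiplicative locus only the canonical-subgroup branch stays multiplicative. Your parametrisation via cyclic $\pp_i^2$-lifts $C_2$ of $C_i$ gives the sharper count of $p$ branches (excluding the backtracking subgroup $A[\pp_i]/C_i$), whereas the paper's phrasing suggests all $p+1$ subgroups of $(A/C_i)[\pp_i]$; either way exactly one branch is multiplicative, so the conclusion is unaffected.
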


  \begin{proof}
   In moduli-theoretic terms, the action of $U_{\pp_i}'$ is given by quotienting the abelian variety $A$ by the level subgroup $C_i$, and summing over possible choices of cyclic $p$-subgroups $C_i' \subseteq (A / C_i)[\pp_i]$. If $A$ is ordinary at $\pp_i$, and $C_i = \hat{A}[\pp_i]$ (the $\pp_i$-torsion of the formal group of $A$) is the unique multiplicative subgroup, then exactly one of these subgroups $C'_i$ is multiplicative (the image of $\hat{A}[\pp_i^2]$), and the remaining $p$ subgroups are \'etale. Thus the restriction of $U_{\pp_i}'$ to the $S$-multiplicative locus is actually a morphism (not just a correspondence). Moreover, since $\hat{A}[\pp_i]$ is the $\pp_i$-part of the kernel of Frobenius, we conclude that the restriction of $U_{\pp_i}'$ coincides, under our identification $\cX_S^{S-\mathrm{mul}} \cong \cX^{S-\ord}$, with the partial Frobenius $\varphi_i$.
  \end{proof}

  By the functoriality of pushforward maps, for any $T \subseteq \{1, \dots, d\}$, we have a commutative diagram (compatible with the action of Hecke operators away from $p$)
  \[\begin{tikzcd}
   H^*_c(\cX_S^{S-\mul}, \omega_T) \rar \dar[equals] &
   H^*(\cX_S, \omega_T) \dar\\
   H^*_c(\cX^{S-\ord}, \omega_T) \rar&
   H^*(\cX, \omega_T).
  \end{tikzcd}\]
  Moreover, the cohomology of $\cX_S$ is isomorphic (by the GAGA theorem) to the algebraic de Rham cohomology of the variety $X_S$, which can be computed using automorphic representations, as in \cref{prop:vanish} above. So the generalised eigenspace $H^n(\cX_S, \omega_T)\{(\alpha_i)_{i \in S}, f\}$ on which the prime-to-$p$ Hecke operators act via the Hecke eigenvalues of $f$, and $U_{\pp_i}'$ acts as $\alpha_i$ for each $i \in S$, is concentrated in degree $n = d - |T|$; and in this degree it is a 1-dimensional space and maps isomorphically to $H^{d-|T|}(\cX, \omega_T)\{f\}$. So to prove \cref{prop:compact,prop:classicity}, it suffices to prove the following:

  \begin{proposition}\label{prop:classicity2} Let $T \subseteq \{1, \dots, d\}$.
   \begin{enumerate}
    \item The operator $U'_S = \prod_{i \in S} U'_{\pp_i}$ is potentially compact on $H^n_c\left(\cX_S^{S-\mul}, \omega_T \right)$;
    \item if $S \cap T \ne \varnothing$, then $H^*_c\left(\cX_S^{S-\mul}, \omega_T \right)\{(\alpha_i)_{i \in S}\}$ vanishes in all degrees;
    \item if $S \cap T = \varnothing$, then the map
    \[ H^*_c\left(\cX_S^{S-\mul}, \omega_T \right)\{(\alpha_i)_{i \in S}\} \to
    H^*_c\left(\cX_S, \omega_T \right)\{(\alpha_i)_{i \in S}\}\]
    is an isomorphism in all degrees.
   \end{enumerate}
  \end{proposition}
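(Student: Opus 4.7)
The plan is to apply higher Coleman theory in the style of \cite{boxerpilloni20}, adapted to the parahoric Shimura variety $\cX_S$ with its stratification by the multiplicative loci $\cX_S^{T'-\mul}$ for $T' \subseteq S$. All three claims will then follow from slope estimates for the operators $U'_{\pp_i}$, combined with the strictly small slope hypothesis $v_p(\alpha_i) < t_i + k_i$.

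For \emph{(1)}, I would argue compactness in the standard Katz--Coleman manner. The preceding lemma identifies $U'_{\pp_i}$ restricted to $\cX_S^{S-\mul}$ with a single-valued morphism, and it extends to a strict neighbourhood of $\cX_S^{S-\mul}$ in $\cX_S$. A sufficiently high power of this morphism contracts the neighbourhood strictly inside itself, because quotienting by the canonical subgroup $C_i$ moves an abelian variety nearer the fully multiplicative locus; hence the induced endomorphism of $H^n_c(\cX_S^{S-\mul}, \omega_T)$ factors through cohomology of a smaller neighbourhood and is therefore compact.

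For \emph{(2)} and \emph{(3)}, I would use d\'evissage along the stratification. Since $\cX_S$ is the disjoint union (as tubes) of the $\cX_S^{T'-\mul}$ for $T' \subseteq S$, excision yields a spectral sequence computing the cofiber of
\[
H^*_c(\cX_S^{S-\mul}, \omega_T) \longrightarrow H^*(\cX_S, \omega_T)
\]
in terms of the local cohomologies $H^*_{\cX_S^{T'-\mul}}(\cX_S, \omega_T)$ for $T' \subsetneq S$. The heart of the argument is a lower bound for the slope of some $U'_{\pp_i}$ ($i \in S$) on each such local cohomology group, expressed in terms of the weight $\mu$ and of the sets $T, T', S$. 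Combined with the hypothesis $v_p(\alpha_i) < t_i + k_i$, this slope bound forces the $(\alpha_i)_{i \in S}$-eigenspace on every proper-stratum contribution to vanish, whence \emph{(3)}. For \emph{(2)}, an analogous slope bound applied directly on $\cX_S^{S-\mul}$ at any index $i_0 \in S \cap T$ yields that $U'_{\pp_{i_0}}$ acts on $H^*_c(\cX_S^{S-\mul}, \omega_T)$ with slope strictly greater than $v_p(\alpha_{i_0})$, so the eigenspace again vanishes.

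The main obstacle is pinning down these slope bounds precisely. One needs a local description of $\cX_S$ along each stratum $\cX_S^{T'-\mul}$, together with the action of $U'_{\pp_i}$ on the formal completion along the stratum and on the fibre of $\omega_T$. Because $p$ splits completely in $F$, this local analysis decouples across the primes $\pp_i$, so the bounds reduce in principle to Coleman's classicality theorem for $\GL_2/\QQ$ applied one prime at a time; the new content is the bookkeeping of how the resulting slope shifts combine with the weights $(k_i, t_i)$ of $\omega_T$ at each place.
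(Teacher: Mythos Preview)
Your proposal is correct and follows essentially the same strategy as the paper: higher Coleman theory via a stratification of $\cX_S$ by the loci $\cX_S^{U-\mul}$ for $U \subseteq S$, with slope bounds on each stratum forcing the $(\alpha_i)$-eigenspace to survive only on the stratum $U = S \cap T^c$. The one organizational difference is that the paper packages the ``one prime at a time'' reduction via a \emph{partial Hodge--Tate period map}---the composite of Scholze's period map with projection onto the $S$-indexed factors of $(\mathbf{P}^1)^d$---and then cites the Boxer--Pilloni theorems (5.13, 5.65, 5.66) verbatim with this partial flag variety in place of the full one; your formulation via d\'evissage and local cohomology along the strata amounts to the same spectral sequence, just without naming that map.
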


 \subsection{The extremal case}

  In the case $S = \{1, \dots, d\}$, the statements of \cref{prop:classicity2} are instances of the theorems of \cite{boxerpilloni20}, applied to the reductive group $G = \operatorname{Res}_{F / \QQ}\GL_2$.

  \begin{itemize}
   \item Part (i) follows from \cite[Theorem 5.13 (2)]{boxerpilloni20}. In the notation of \emph{op.cit.}, we take the Kostant representative $w$ to be the identity, and the open subset $\mathcal{U}$ of the Shimura variety to be the entire space; hence the cohomology with support $R\Gamma_{\mathcal{Z} \cap \mathcal{U}}(\mathcal{U}, -)$ appearing \emph{loc.cit.} is the compactly-supported cohomology of $\mathcal{Z}$, which is an arbitrarily small rigid-analytic neighbourhood of the $S$-multiplicative locus $\mathcal{X}^{S-\mul}_S$. Since the compactly-suppored cohomology of the dagger space $\mathcal{X}^{S-\mul}_S$ is the direct limit of the cohomology of its rigid-analytic neighbourhoods, this gives the claim.

   \item Part (ii) is an instance of \cite[Corollary 5.65]{boxerpilloni20}. Since $\kappa_T$ has no component equal to 1, the set $C(\kappa)^-$ for $\kappa = \kappa_T$ is a singleton; explicitly, it is the element of the Weyl group $W_G = \prod_{i = 1}^d C_2$ whose $i$-th component is nontrivial iff $i \in T$. In particular, $\mathrm{id} \in C(\kappa_T)^-$ iff $T = \varnothing$. However, we have seen in the previous paragraph that the finite-slope part of $R\Gamma(\cX_S^{S-\mul}, \kappa_T)$ coincides with Boxer and Pilloni's $R\Gamma_w(K^p, \kappa, \chi)^{-,\mathrm{fs}}$ for $\kappa = \kappa_T$, $w = \mathrm{id}$, and suitable choices of $K^p$ and $\chi$. So the strictly small slope part of $R\Gamma(\cX_S^{S-\mul}, \kappa_T)$ vanishes for $T \ne \varnothing$; and this translates to the bounds for the valuations of $(\alpha_i)_{i = 1, \dots, d}$ which we have imposed here.

   \item Part (iii) follows in exactly the same way from \cite[Theorem 5.66]{boxerpilloni20}.
  \end{itemize}

 \subsection{Modifications for general \texorpdfstring{$S$}{S}}

  For a general subset $S \subseteq \{1, \dots, d\}$, we need to modify the theory developed in \cite{boxerpilloni20} slightly. The starting point for the constructions of \emph{op.cit.} is Scholze's Hodge--Tate period map (see Theorem 4.68 of \emph{op.cit.}), which is a continuous map from the perfectoid Shimura variety of infinite level at $p$ to a flag variety $\mathcal{FL}_{G, \mu}$. In our case, this flag variety is a product of copies of $\mathbf{P}^1$, indexed by $\{1, \dots, d\}$.

  If we compose this period map with the projection onto only those factors in the product given by $S$, we obtain a ``partial period map'', which only detects the level structure and Hodge filtration at a subset of the primes above $p$. In particular, the preimage of the $\Qp$-points of $(\mathbf{P}^1)^S$ is the $S$-ordinary locus (rather than the fully ordinary locus); and the subsets $\cX_S^{T-\mul}$, for $T \subseteq S$, are the preimages of the special points in the partial flag variety whose $i$-th component is $\infty$ for $i \in T$ and $0$ otherwise (the image of the partial Weyl group $W_{G, S} = \prod_{i \in S} C_2$).

  We can now run the entire machine of \emph{op.cit.} in this setting, using the partial period map to define loci in the Shimura variety $X_S$ which are the support conditions for cohomology groups, and to analyse the action of Hecke operators on these loci. This gives a spectral sequence (which for $S = \{1, \dots, d\}$ is the Bruhat-stratification sequence of \cite[Theorem 5.15]{boxerpilloni20}):
  \[ E_1^{ij} = \bigoplus_{\substack{U \subseteq S \\ |S \setminus U| = i}} H^{i + j}_{(U)}(\cX_S^{U-\mul}, \omega_T)^{-,\mathrm{fs}} \Longrightarrow H^{i+j}(\cX_S, \omega_T)^{-, \mathrm{fs}}, \]
  where ``$-,\mathrm{fs}$'' denotes the finite-slope part for $U'_S$ (which acts compactly on all the terms), and $(U)$ denotes an appropriate partial compact support condition depending on $U$ (fully compact support when $U = S$, and non-compact support when $U = \varnothing$). In particular, the $i = 0$ terms are the compactly-supported cohomology of $\cX_S^{S-\mul}$. Exactly as in the case $S = \{1, \dots, d\}$ treated in \emph{op.cit.}, one obtains lower bounds on the slopes of the $U'_{\pp_i}$-operators, and these imply that for a strictly-small-slope eigenvalue system $(\alpha_i)_{i \in S}$, the corresponding generalised eigenspace in $H^{i + j}_{(U)}(\cX_S^{U-\mul}, \omega_T)^{-,\mathrm{fs}}$ can only be non-zero when $U = S \cap T^c$, giving the proof of \cref{prop:classicity2}.

\section{Proof of \cref{thm:main}, III: conclusion}

  \begin{notation}
   For $S \subseteq \{1, \dots, d\}$, write
   \[ \Fil_S^- D_p = \frac{D_p}{\sum_{i \in S} \Fil_i^+ D_p},\]
   whose dimension is $2^{d - |S|}$. For $T \subseteq \{1, \dots, d\}$, we define $\Fil^T \Fil_S^- D_p$ as the image of $\Fil^T D_p$ in $\Fil_S^-$; note that this is zero unless $S \cap T = \varnothing$. One checks that this defines an $I$-filtration on $\Fil_S^- D_p$, where $I$ is the poset of subsets of $\{1, \dots, d\}$ as usual; and the $T$-th graded piece $\Gr^T\Fil_S^- D_p$ is isomorphic to $\Gr^{T} D_p$ if $S \cap T = \varnothing$, and is zero otherwise. (This is a slight variation on Proposition 1.3.7 of \cite{NS-plectic1}.)
  \end{notation}

  Since $H^n_c\left(\cX^{S-\ord},\omega_T\right)\{(\alpha_i)_{i \in S}, f\}$ vanishes outside degree $d - |T|$, we can argue exactly as in \cref{prop:plecfiltration} to see that the subspaces $\sF^T D_p(f)\{(\alpha_i)_{i \in S}\}$ satisfy the distributive property, and the $T$-th graded piece is $0$ if $S \cap T \ne \varnothing$ and maps isomorphically to the corresponding graded piece of $\Fil^\bullet D_p$ otherwise. This shows that $(\sF^T)_{T \in I}$ defines an $I$-filtration on the subspace $D_p^S \coloneqq \bigcap_{i \in S} D_p^{\varphi_i = \alpha_i}$, and that the map of $I$-filtered vector spaces
  \[ \left(D_p^S, \sF^\bullet\right) \to \left(\Fil_S^- D_p, \Fil^\bullet\right)\]
  induces an isomorphism on every graded piece, and is hence an isomorphism of $I$-filtered vector spaces.

  It remains to check that for each $T$ we have
  \[ D_p^S \cap \sF^T D_p = D_p^S \cap \Fil^T D_p.\]
  The inclusion ``$\subseteq$'' is clear, since $\sF^T D_p \subseteq \Fil^T D_p$. However, we know that the map
  \[ D_p^S / \left(D_p^S \cap \sF^T\right) \to \frac{\Fil_S^- D_p}{\Fil^T \Fil_S^- D_p}\]
  is a bijection; as this map clearly factors through $D_p^S / \left(D_p^S \cap \Fil^T\right)$, we conclude that $D_p^S \cap \Fil^T$ cannot be strictly larger than $D_p^S \cap \sF^T$, as this would contradict the injectivity of this map. This completes the proof of \cref{thm:main}.

\section{Quadratic case}
 \label{sect:quadratic}

 Throughout this section we suppose $[F : \QQ] = 2$. Then we can prove a stronger result by making use of the self-duality of $D_p$.

 \begin{theorem}\label{thm:nonBC}
  Suppose the following conditions hold:
  \begin{enumerate}
   \item[(a)] The character of $f$ is trivial.
   \item[(b)] There exists an $i \in \{1, 2\}$ such that at least one of the two roots $\alpha_i, \beta_i$ of the Hecke polynomial of $f$ has strictly small slope.
   \item[(c)] The set of pairwise products $\{ \alpha_1 \alpha_2, \alpha_1 \beta_2, \beta_1 \alpha_2, \beta_1 \beta_2\}$ has four distinct elements.
  \end{enumerate}
  Then \cref{main} is true.
 \end{theorem}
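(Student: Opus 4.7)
The plan is to combine the classicity theorem \cref{thm:main} --- applied at the prime where condition (b) supplies a strictly-small-slope root --- with the self-duality on $D_p(f)$ provided by the trivial-character hypothesis (a). Condition (c) will make Frobenius compatibility automatic; only the filtration compatibility requires the duality input.

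First, condition (c) forces the total Frobenius $\varphi = \varphi_1 \varphi_2$ on $D_p(f)$ to have four distinct eigenvalues, namely the four pairwise products $\gamma_1 \gamma_2$. Combined with the partial Eichler--Shimura relations $(\varphi_i - \alpha_i)(\varphi_i - \beta_i) = 0$, each of the four 1-dimensional $\varphi$-eigenlines in $D_p(f)$ is automatically a joint $(\varphi_1, \varphi_2)$-eigenspace, and condition (c) forces the joint eigenvalue pair $(\gamma_1, \gamma_2)$ on each line to be \emph{uniquely determined} by its product: on the eigenline for $\varphi$-eigenvalue $\alpha_1 \alpha_2$, for instance, the pair must be $(\alpha_1, \alpha_2)$, since the alternative $(\beta_1, \beta_2)$ would give a different product. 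The same uniqueness holds on $D_{\pp_1}(f) \otimes D_{\pp_2}(f)$ by construction. Since any $\psi_p$ arising from \eqref{eq:defpsi} already intertwines the total $\varphi$ and hence the 4-line decompositions on the two sides, it automatically intertwines $\varphi_1$ and $\varphi_2$ --- using only (c), not (a) or (b).

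For the filtration compatibility, assume WLOG (by (b)) that $\alpha_1$ is a strictly-small-slope root at $\pp_1$, and apply \cref{thm:main} with $S = \{1\}$. This supplies a canonical 2-dimensional complement $D_p^{\varphi_1 = \alpha_1}$ to $\Fil_1^+$, and the strict compatibility with partial filtrations identifies $\Fil_2^+ \cap D_p^{\varphi_1 = \alpha_1}$ as a specific 1-dimensional subspace mapping isomorphically onto $\Fil_2^+ / \Fil^{\{1,2\}} D_p$. Since $\psi_p$ is automatically an isomorphism of filtered $\varphi$-modules (hence preserves the total Hodge filtration), and $\Fil^{\{1,2\}} D_p$ matches $\Fil(D_{\pp_1}) \otimes \Fil(D_{\pp_2})$ (the top total Hodge step on both sides), one deduces $\psi_p(\Fil_2^+) = D_{\pp_1} \otimes \Fil(D_{\pp_2})$. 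For the analogous identification $\psi_p(\Fil_1^+) = \Fil(D_{\pp_1}) \otimes D_{\pp_2}$, we invoke (a): the trivial central character provides a Poincar\'e pairing on $D_p(f)$ that is compatible via $\psi_p$ with the natural tensor pairing on $\bigotimes_i D_{\pp_i}(f)$; under this pairing each $\Fil_i^+$ is Lagrangian, and $D_p^{\varphi_1 = \alpha_1}$ pairs nondegenerately with $D_p^{\varphi_1 = \beta_1}$ (since the adjoint of $\varphi_i$ is, by self-duality, a scalar multiple of $\varphi_i^{-1}$). Dualising \cref{thm:main} --- effectively yielding a classicity statement for the $\beta_1$-eigenspace even when $\beta_1$ is not itself strictly small slope --- then pins down $\Fil_1^+$.

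The hardest step will be this duality-based identification of $\Fil_1^+$. The case $k_1 = k_2$ is especially delicate, since the total Hodge filtration then fails to distinguish $\Fil_1^+$ from $\Fil_2^+$; one must carefully analyse how the Poincar\'e pairing, the $\varphi$-eigendecomposition, and the partial filtrations interact inside the 3-dimensional subspace $\Fil_1^+ + \Fil_2^+ = (\Fil^{\{1,2\}} D_p)^\perp$ of the symplectic 4-dimensional space $D_p(f)$. It may additionally be necessary to precompose $\psi$ with the $S_2$-swap of the tensor factors (the permutation component of the conjectural plectic Galois group, cf.~\S\ref{rmk:plecticGal}) to ensure the correct labelling of the primes $\pp_1, \pp_2$ with the tensor factors of $\bigotimes_i D_{\pp_i}(f)$.
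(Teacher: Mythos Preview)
Your overall strategy is close to the paper's, and your Frobenius argument is exactly right: under (c), each total-$\varphi$ eigenline is a joint $(\varphi_1,\varphi_2)$-eigenline with uniquely determined eigenvalue pair, so any $\psi_p$ automatically intertwines the partial Frobenii. Your treatment of $\Fil_2^+$ via \cref{thm:main} is also essentially correct, though you suppress the key point (weak admissibility of $D_{\pp_1}$ forces $\Fil^+ D_{\pp_1}\cap D_{\pp_1}^{\varphi=\alpha_1}=0$, which is what pins $\psi_p(\Fil_2^+\cap D_p^{\varphi_1=\alpha_1})$ down to $D_{\pp_1}^{\varphi=\alpha_1}\otimes\Fil D_{\pp_2}$ inside the 3-dimensional Hodge step).

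There is, however, a genuine error in your handling of $\Fil_1^+$. The Poincar\'e pairing on $D_p(f)$ is \emph{symmetric}, not symplectic: the cup-product in degree $d$ is $(-1)^d$-symmetric and $d=2$ here. This is not cosmetic. The paper's argument---and the only mechanism I see to complete your plan---passes to the 2-dimensional quotient $(\Fil_1^+ + \Fil_2^+)/\Fil^{\{1,2\}}$ and uses that a nondegenerate \emph{orthogonal} form on a plane has at most two isotropic lines. Hence $\psi_p(\Fil_1^+)$, being Lagrangian, must be one of $\Fil D_{\pp_1}\otimes D_{\pp_2}$ or $D_{\pp_1}\otimes\Fil D_{\pp_2}$; having identified $\psi_p(\Fil_2^+)$ as the latter, you conclude it is the former. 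If the form were symplectic, \emph{every} line in that quotient would be isotropic and the argument would collapse. Your proposed route---``dualising \cref{thm:main} to obtain classicity for the $\beta_1$-eigenspace''---does not work as stated: both $\Fil_1^+$ and $D_p^{\varphi_1=\alpha_1}$ are self-perpendicular Lagrangians, so taking orthogonal complements of the $\alpha_1$-statement returns the same statement, and there is no reason for $D_p^{\varphi_1=\beta_1}$ to be transverse to either $\Fil_i^+$ when $\beta_1$ fails the slope bound.

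Finally, the $S_2$-swap is unnecessary here: under (c) the $\varphi$-eigenvalues rigidly determine the labelling of the tensor factors, so no relabelling ambiguity can arise. The swap enters only in the base-change setting of \cref{thm:BC}, precisely because (c) fails there.
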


 We also have a complementary result for base-change forms (which never satisfy condition (c)).

 \begin{theorem}\label{thm:BC}
  Suppose the following conditions hold:
  \begin{enumerate}
   \item[(a')] $f$ is the base-change of an elliptic modular form $f_0$ with trivial character (so in particular $k_1 = k_2$).
   \item[(b')] The roots $\alpha_0, \beta_0$ of the Hecke polynomial of $f$ at $p$ are distinct, and at least one has strictly small slope.
   \item[(c')] We have $\alpha_0 / \beta_0 \notin \{\pm 1\}$.
  \end{enumerate}
  Then \cref{main} is true.
 \end{theorem}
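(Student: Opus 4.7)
The plan is to leverage the reducibility of $V_p(f)$ available for base-change forms together with the self-duality of $D_p(f)$ coming from Poincar\'e duality on the Hilbert modular surface, in order to bypass the asymmetry built into hypothesis (b').

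Since $f$ is the base change of $f_0$, the standard representation $V_p^{\mathrm{std}}(f)$ is $V_p(f_0)|_{\Gamma_F}$, and the tensor induction in \eqref{eq:defpsi} takes the explicit form $V_p(f) \cong V_p(f_0) \otimes V_p(f_0)$ as a $\Gamma_\QQ$-representation, with the nontrivial element of $\Gal(F/\QQ) \subseteq \Gamma_\QQ^{\plec}$ acting by the factor swap. Passing to de Rham yields an isomorphism $\psi_p \colon D_p(f) \cong D_p(f_0) \otimes D_p(f_0)$ of filtered $\varphi$-modules, determined only up to rescaling on each of the two irreducible constituents of $V_p(f)|_{\Gamma_\QQ}$. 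The goal is to use this two-dimensional scalar freedom to arrange that, under $\psi_p$, the intrinsic partial Frobenii $\varphi_1, \varphi_2$ and partial filtrations $\Fil_1^+, \Fil_2^+$ on $D_p(f)$ correspond to the tensor-factor operators $\varphi \otimes 1, 1 \otimes \varphi$ and the tensor-factor Hodge filtrations on the target.

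Applying \cref{thm:main} with $S = \{i\}$ and $\alpha_i = \alpha_0$ the strictly-small-slope root furnished by (b'), and using the partial Eichler--Shimura relation $(\varphi_i - \alpha_0)(\varphi_i - \beta_0) = 0$ together with the distinctness of the roots to conclude that $\varphi_i$ is semisimple, one obtains $D_p(f) = D_p(f)^{\varphi_i = \alpha_0} \oplus D_p(f)^{\varphi_i = \beta_0}$ with both summands of dimension $2$. Commutativity of $\varphi_1$ and $\varphi_2$ then produces a simultaneous eigenspace decomposition indexed by pairs $(\lambda, \mu) \in \{\alpha_0, \beta_0\}^2$, and the strict-compatibility clause of \cref{thm:main} locates the $(\alpha_0, \alpha_0)$ joint eigenspace precisely relative to $\Fil_1^+$ and $\Fil_2^+$. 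The three remaining joint eigenspaces are reached via self-duality: hypothesis (a') combined with Poincar\'e duality on the Hilbert modular surface and the crystalline comparison endows $D_p(f)$ with a perfect pairing under which each $\varphi_i$ is self-adjoint up to a fixed power of $p$ and each $\Fil_i^+$ pairs with a complementary subspace, so dualizing the output of \cref{thm:main} produces the analogous statements with the large-slope root $\beta_0$ in place of $\alpha_0$.

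Hypothesis (c') enters at the end: since $\alpha_0 \neq \pm \beta_0$, the swap action identified with $\Gal(F/\QQ) \subseteq \Gamma_\QQ^{\plec}$ cannot act as a scalar on either mixed joint eigenspace, and must therefore interchange $D_p(f)^{\varphi_1 = \alpha_0, \varphi_2 = \beta_0}$ with $D_p(f)^{\varphi_1 = \beta_0, \varphi_2 = \alpha_0}$ non-trivially. A dimension count then forces all four joint eigenlines to be one-dimensional, matching the joint eigenline structure of $D_p(f_0) \otimes D_p(f_0)$; the two scalar degrees of freedom in $\psi$ suffice to align these lines, and the filtration data from \cref{thm:main} and its dual then forces $\psi_p$ to intertwine the partial filtrations as well. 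The principal difficulty will be the self-duality bookkeeping --- identifying the precise Tate twist making $\varphi_i$ self-adjoint, computing the annihilators of the partial filtrations, and verifying that the condition $\alpha_0 \neq -\beta_0$ from (c') really excludes the remaining degeneracy, in which the swap could act as a scalar on a mixed joint eigenspace and thereby destroy the canonical matching.
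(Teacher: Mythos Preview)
Your proposal has a genuine gap at the step ``the two scalar degrees of freedom in $\psi$ suffice to align these lines''. Write the target as $D_p(f_0)\otimes D_p(f_0)$ and decompose the $\Gamma_{\QQ}$-representation as $\mathrm{Sym}^2\oplus\wedge^2$; the freedom in $\psi$ is then rescaling each summand independently. On the $2$-dimensional $\varphi=\alpha_0\beta_0$ eigenspace of the target, this action is diagonal in the basis $\{e_\alpha\otimes e_\beta+e_\beta\otimes e_\alpha,\ e_\alpha\otimes e_\beta-e_\beta\otimes e_\alpha\}$, so on the projective line of lines it factors through the single parameter $c_1/c_2$. You therefore have \emph{one} effective degree of freedom but \emph{two} lines to align (the images of the $(\alpha_0,\beta_0)$ and $(\beta_0,\alpha_0)$ joint eigenspaces of $D_p(f)$), and there is no reason these two conditions are compatible. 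Concretely, nothing in your argument rules out that $\psi_{0,p}$ sends the $(\varphi_1,\varphi_2)=(\alpha_0,\beta_0)$ line to a generic line in the $\varphi=\alpha_0\beta_0$ eigenspace, in which case no choice of $(c_1,c_2)$ will simultaneously align both mixed lines.

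A second, smaller gap: your ``dualize \cref{thm:main} to obtain the $\beta_0$ statement'' does not work as stated. Under the Poincar\'e pairing both $D_p^{\varphi_i=\alpha_0}$ and $\Fil_i^+$ are maximal isotropic, hence equal to their own orthogonal complements; taking annihilators in the relation $D_p^{\varphi_i=\alpha_0}\cap\Fil_i^+=0$ therefore returns $D_p^{\varphi_i=\alpha_0}+\Fil_i^+=D_p$, which is the same statement, not one about the $\beta_0$-eigenspace. (Indeed the $\beta_0$ analogue of \cref{thm:main} can simply be false when $\beta_0$ is not of strictly small slope.)

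The paper closes the main gap by \emph{first} constraining $\psi$ to be compatible with the symmetric bilinear forms on both sides (which cuts the freedom down to a global scalar together with the factor-swap involution). Then the $(\alpha_0,\beta_0)$ and $(\beta_0,\alpha_0)$ joint eigenlines in $D_p(f)$ are shown to be \emph{isotropic}, hence are the only two isotropic lines in the $2$-dimensional $\varphi=\alpha_0\beta_0$ eigenspace; since $\psi_p$ respects the form it must permute this pair, so $\psi_p$ is either plectic or anti-plectic for the partial Frobenii. The same isotropy argument handles $\Fil_1^+,\Fil_2^+$, and \cref{thm:main} (only for the small-slope root) is used once to show the Frobenius and filtration dichotomies are consistent. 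Finally, the base-change hypothesis supplies the swap involution on $V_p(f)$, which toggles plectic and anti-plectic, so one of the two admissible choices of $\psi$ works. Hypothesis (c${}'$) is used only to ensure the three $\varphi$-eigenvalues $\alpha_0^2,\alpha_0\beta_0,\beta_0^2$ are distinct, so that the diagonal eigenlines are automatically aligned; your suggested role for it (preventing the swap from acting as a scalar on a mixed eigenspace) is not how it enters.
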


 \begin{remark}
  The proof of these statements relies on the ``accident'' that the image of the tensor-product map $\GL_2 \times \GL_2 \to \GL_4$ has a nice description: it is the orthogonal similitude group $\operatorname{GSO}_4$. We do not know of a nice description of the image of the analogous map $\GL_2 \times \dots \times \GL_2 \to \GL_{2^d}$ for $d \ge 3$, so the methods of this section seem unlikely to generalise beyond the quadratic case.
 \end{remark}

\subsection{Proof of \cref{thm:nonBC}}

 For any $F$ we have an isomorphism of \'etale sheaves $\cL_{\mu, \et}^\vee \cong \cL_{\mu, \et}(dw)$ so we obtain a perfect Poincar\'e duality pairing
 \[ H^d_{\et, !}(Y_{\overline{\QQ}}, \cL_{\mu, \et}) \times H^d_{\et, !}(Y_{\overline{\QQ}}, \cL_{\mu, \et}) \to H^{2d}_{\et, c}(Y_{\overline{\QQ}}, \Qp(-dw)) = \Qp(-d(w + 1)), \]
 where $!$ denotes interior cohomology. The transpose of the Hecke operator $T_\mathfrak{q}$ for an unramified prime $\mathfrak{q}$ is $\langle \mathfrak{q}\rangle^{-1} T_\mathfrak{q}$, where $\langle \mathfrak{q}\rangle^{-1}$ is the diamond operator; so if $f$ has trivial character, we obtain a perfect pairing on the $f$-generalised eigenspace. Moreover, since this pairing is given by a cup-product in degree $d$ (and the cup-product is graded-commutative), it is a symmetric bilinear form if $d$ is even and antisymmetric if $d$ is odd. Thus, in the $d = 2$ case we obtain a canonical symmetric bilinear form on $V_p(f)$, which we denote by $\lambda$. A similar construction using Poincar\'e duality for de Rham cohomology gives a symmetric bilinear form on $D_p(f)$, which we also denote by $\lambda$; and these two bilinear forms are compatible via the functor $\DD_{\dR}$.

 We also have a canonical-up-to-scalars symmetric bilinear form on the tensor induction $(\bigotimes-\mathrm{Ind})(V_p^{\mathrm{std}}(f))$, arising from the symplectic self-duality of $V_p^{\mathrm{std}}(f)$: the underlying space of $(\bigotimes-\mathrm{Ind})(V_p^{\mathrm{std}}(f))$ can be identified with $V_p^{\mathrm{std}}(f) \otimes V_p^{\mathrm{std}}(f)$, and the symmetric bilinear form is given by $\langle v_1 \otimes v_2, v_1' \otimes v_2'\rangle = \langle v_1, v_1'\rangle \langle v_2, v_2' \rangle$, for any choice of Galois-equivariant symplectic form on $V_p^{\mathrm{std}}(f)$.

 We can, and do, choose the isomorphism $\psi$ of \eqref{eq:defpsi} to be compatible with the bilinear forms (up to scalars).

 \begin{proposition} For $i = 1, 2$, we have:
  \begin{enumerate}[(1)]
   \item The partial Frobenius $\varphi_i$ on $D_p(f)$ satisfies $\lambda(\varphi_i x, \varphi_i y) = p^{(w + 1)} \lambda(x, y)$.
   \item The space $\Fil_i^+ D_p$ is a maximal isotropic subspace of $D_p$.
   \item The eigenspaces $D_p^{(\varphi_i = \alpha_i)}$ and $D_p^{(\varphi_i = \beta_i)}$ are maximal isotropic.
  \end{enumerate}
 \end{proposition}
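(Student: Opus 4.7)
The plan is to prove the three statements in order: (1), then (2), then (3).

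For (1), the approach will be to compute the Poincar\'e adjoint $\varphi_i^\dagger$ of $\varphi_i$ with respect to $\lambda$. The key inputs will be: (a) the partial Eichler--Shimura relation $\varphi_i^2 - U_{\pp_i} \varphi_i + p^{w+1} = 0$ on $D_p(f)$ (where $U_{\pp_i}$ acts as the scalar $\alpha_i + \beta_i$), from Nekov\'a\v{r}'s appendix recalled at the start of the paper; (b) the fact that $\varphi_i$ arises from a degree-$p$ finite morphism of $Y_0$, whose Poincar\'e-dual correspondence is a ``partial Verschiebung'' $V_i$ satisfying $\varphi_i V_i = V_i \varphi_i = p^{w+1}$ once the Tate twist in the coefficient sheaf $\cL_\mu$ is accounted for; and (c) the partial Hecke decomposition $\varphi_i + V_i = U_{\pp_i}$, the cohomological shadow of the Eichler--Shimura congruence at $\pp_i$. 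Combining these, $\varphi_i^\dagger = V_i = p^{w+1} \varphi_i^{-1}$, whence $\lambda(\varphi_i x, \varphi_i y) = \lambda(\varphi_i^\dagger \varphi_i x, y) = p^{w+1} \lambda(x, y)$.

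For (2), I will use that the Hodge graded pieces $\Gr^S D_p(f) \cong H^{d-|S|}(X, \omega_S)\{f\}$ pair non-trivially under Poincar\'e/Serre duality only with the complementary graded piece $\Gr^{S^c} D_p(f)$, via the Serre-duality identification $\omega_{S^c} \cong \omega_S^\vee \otimes \omega_X$ (up to a central twist). The subspace $\Fil_i^+ = \Fil^{\{i\}} D_p$ has Hodge graded pieces indexed by $\{S \in I : S \supseteq \{i\}\} = \{\{i\}, \{1,2\}\}$; none of the complementary sets $\{j\}$ (with $j \ne i$) or $\varnothing$ appears in this list, so every Serre pairing of $\Fil_i^+$ with itself vanishes. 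A short filtration argument (stepping through $0 \subseteq \Fil^{\{1,2\}} D_p \subseteq \Fil_i^+$) then gives $\lambda \equiv 0$ on $\Fil_i^+ \times \Fil_i^+$; since $\dim \Fil_i^+ = 2 = \frac{1}{2} \dim D_p$ and $\lambda$ is non-degenerate, this isotropic subspace is maximal.

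For (3), I will apply (1): for $x, y \in D_p(f)^{(\varphi_i = \alpha_i)}$, the identity becomes $\alpha_i^2\, \lambda(x, y) = p^{w+1}\, \lambda(x, y)$. The relation $\alpha_i \beta_i = p^{w+1}$ shows that $\alpha_i^2 = p^{w+1}$ precisely when $\alpha_i = \beta_i$, which is ruled out by assumption (c) of \cref{thm:nonBC} (if $\alpha_i = \beta_i$ then $\alpha_i \alpha_j = \beta_i \alpha_j$, contradicting the distinctness of the four pairwise products). Hence $\alpha_i^2 \ne p^{w+1}$, forcing $\lambda(x,y) = 0$, so $D_p(f)^{(\varphi_i = \alpha_i)}$ is a 2-dimensional isotropic subspace of the non-degenerate 4-dimensional form, hence maximal isotropic; the same argument works verbatim for the $\beta_i$-eigenspace. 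The main obstacle is (1), specifically the identification $\varphi_i^\dagger = p^{w+1} \varphi_i^{-1}$: the corresponding statement for the full Frobenius, $\lambda(\varphi x, \varphi y) = p^{2(w+1)} \lambda(x, y)$, is standard from the Tate twist in Poincar\'e duality, but splitting this symmetrically across the two primes above $p$ requires a genuine geometric input, namely the construction of the partial Verschiebung $V_i$ at $\pp_i$ and verification that it is indeed the Poincar\'e-adjoint of $\varphi_i$ after accounting for the Tate twist carried by $\cL_\mu$.
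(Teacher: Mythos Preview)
Your approach is essentially correct and close to the paper's, with one small omission.

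For (1), the paper computes the scaling directly: $\varphi_i$ is pullback by a degree-$p$ endomorphism of $Y_0$, contributing a factor $p$ to the pairing in top degree, while the identification $\varphi_i^*\cL_{\mu,\dR}\cong\cL_{\mu,\dR}$ scales the fibrewise duality by $p^w$ (the underlying isogeny of Hilbert--Blumenthal abelian varieties has degree $p$), for a total of $p^{w+1}$. This is exactly what you package as your input (b). Your inputs (a) and (c) are then not actually needed: once $\varphi_i^{\dagger}\varphi_i=p^{w+1}$ is known from degree considerations, nothing more is required. If instead you take from (b) only the bare identification $\varphi_i^{\dagger}=V_i$ and use (a) and (c) to compute $\varphi_i V_i$, note that the proof of the partial Eichler--Shimura relation in Nekov\'a\v{r}'s appendix itself rests on the degree-$p$ computation, so that route is mildly circular. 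Either way the genuine geometric content is the same as the paper's.

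For (2), your Serre-duality argument on the BGG graded pieces is exactly what the paper means by ``the shape of the cup-product on the BGG complex''.

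For (3), you correctly deduce isotropy from (1) together with $\alpha_i\ne\beta_i$, but you then assert that $D_p^{(\varphi_i=\alpha_i)}$ is $2$-dimensional without justification. The partial Eichler--Shimura relation only gives $D_p=D_p^{(\varphi_i=\alpha_i)}\oplus D_p^{(\varphi_i=\beta_i)}$; a priori the dimensions could split as $1+3$. The paper fills this with one more use of (1): since $\lambda$ vanishes on each eigenspace and is globally non-degenerate, it restricts to a perfect pairing between the two eigenspaces, forcing equal dimensions. Add that sentence and your proof is complete.
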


 \begin{proof}
  For part (1), we note that the isomorphism $\varphi_i^*(\cL_{\mu, \dR}) \cong \cL_{\mu, \dR}$, giving the action of $\varphi_i$ on the cohomology, multiplies the duality pairing on the fibres of $\cL_{\mu, \dR}$ by $p^w$ (since the pairing comes from the $w$-th tensor power of the Poincar\'e duality pairing on the top-degree cohomology of a Hilbert--Blumenthal abelian variety $A$, and the canonical isogeny $A \to A / (A[\pp] \cap \ker \varphi_A)$ has degree $p$, and thus acts as $p$ on the top-degree cohomology). Since $\varphi_i$ also has degree $p$ as a morphism from $Y_0$ to itself, it acts on the top-degree rigid cohomology as multiplication by $p^{w + 1}$.

   For (2), it follows easily from the shape of the cup-product on the BGG complex that $\Fil_i^+ D_p$ is an isotropic subspace of $D_p$, and since it is 2-dimensional it is maximal isotropic. For part (3), we note that assumption (c) implies $\alpha_i \ne \beta_i$; so we see from part (1) that $\lambda$ must vanish on the $\alpha_i$ and $\beta_i$ eigenspaces and identify each with the dual of the other. So they must each be 2-dimensional and maximal isotropic.
 \end{proof}

 \begin{corollary}
  If $k_1 \ne k_2$, the isomorphism $\psi_p$ respects the partial filtrations. If $k_1 = k_2$, then $\psi_p$ either respects the partial filtrations, or interchanges them (so the image of $\Fil_1^+ D_p$ is $D_{\pp_1} \otimes (\Fil^+  D_{\pp_2})$ and vice versa).
 \end{corollary}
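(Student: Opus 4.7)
The plan is to combine two facts about $\psi_p$: it preserves the Hodge filtration, and by the preceding construction it preserves the symmetric bilinear forms on both sides up to a nonzero scalar. First I would enumerate the Hodge types. On $D_{\pp_1} \otimes D_{\pp_2}$ these are $(t_1, t_2)$, $(t_1+k_1+1, t_2)$, $(t_1, t_2+k_2+1)$, and $(t_1+k_1+1, t_2+k_2+1)$, producing jumps of size one in the total Hodge filtration at the four degrees $t_1+t_2+\{0, k_1+1, k_2+1, k_1+k_2+2\}$; exactly the same list of jumps occurs on $D_p(f)$ by the construction of the partial filtrations in \cref{sect:BGGcplx}.

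Suppose first $k_1 < k_2$ (the case $k_1 > k_2$ is symmetric). A direct computation gives
\[ \Fil^{t_1+t_2+k_2+1} D_p = \Fil_2^+ D_p, \qquad \Fil^{t_1+t_2+k_2+1}(D_{\pp_1} \otimes D_{\pp_2}) = D_{\pp_1} \otimes \Fil^+ D_{\pp_2}, \]
both $2$-dimensional; hence preservation of the Hodge filtration forces $\psi_p(\Fil_2^+ D_p) = D_{\pp_1} \otimes \Fil^+ D_{\pp_2}$. To pin down $\psi_p(\Fil_1^+)$ I would pass to the smaller jump at $t_1+t_2+k_1+1$, where on both sides the Hodge filtration is $3$-dimensional, equal respectively to $\Fil_1^+ + \Fil_2^+$ and to $\Fil^+ D_{\pp_1} \otimes D_{\pp_2} + D_{\pp_1} \otimes \Fil^+ D_{\pp_2}$. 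Thus $\psi_p(\Fil_1^+)$ is a $2$-plane inside the latter. Here the bilinear form enters crucially: by the previous proposition $\Fil_1^+$ is maximal isotropic in $D_p$, so $\psi_p(\Fil_1^+)$ must be a maximal isotropic $2$-plane in this $3$-dimensional subspace. A short calculation using symplectic bases of the $D_{\pp_i}$ (with $\Fil^+$ spanned by the first basis vector) shows that the induced symmetric form on this $3$-dimensional subspace has $1$-dimensional radical $\Fil^+ D_{\pp_1} \otimes \Fil^+ D_{\pp_2}$ and hyperbolic-plane quotient, so there are exactly two maximal isotropic $2$-planes in it, namely $\Fil^+ D_{\pp_1} \otimes D_{\pp_2}$ and $D_{\pp_1} \otimes \Fil^+ D_{\pp_2}$. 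Since $\Fil_1^+ \ne \Fil_2^+$ (their intersection being the one-dimensional top Hodge step), $\psi_p(\Fil_1^+)$ is forced to be the remaining plane $\Fil^+ D_{\pp_1} \otimes D_{\pp_2}$.

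When $k_1 = k_2$, the total Hodge filtration no longer separates the two partial filtrations: it only singles out their sum $\Fil_1^+ + \Fil_2^+$, whose image under $\psi_p$ is still the same $3$-dimensional subspace as above. The same isotropic classification nevertheless forces $\{\psi_p(\Fil_1^+), \psi_p(\Fil_2^+)\}$ to equal $\{\Fil^+ D_{\pp_1} \otimes D_{\pp_2}, \, D_{\pp_1} \otimes \Fil^+ D_{\pp_2}\}$ as unordered pairs, giving the two stated possibilities.

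The only step beyond bookkeeping is the classification of maximal isotropic $2$-planes inside the $3$-dimensional subspace, and this is elementary linear algebra once the symmetric form is written down explicitly; everything else reduces to matching Hodge jumps and invoking the isotropy of $\Fil_i^+$ established just before.
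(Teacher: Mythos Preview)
Your argument is correct and follows essentially the same route as the paper. The only organizational difference is that the paper passes to the two-dimensional quotient $(\Fil_1^+ + \Fil_2^+)/(\Fil_1^+ \cap \Fil_2^+)$ and counts isotropic \emph{lines} under the induced nondegenerate form there, whereas you stay in the three-dimensional subspace $\Fil_1^+ + \Fil_2^+$ and count maximal isotropic \emph{planes}; these are equivalent via the observation that any such plane must contain the one-dimensional radical, which you identify correctly. The paper also runs the isotropy argument uniformly and only invokes the extra Hodge step to break the symmetry at the end when $k_1 \ne k_2$, while you use that step up front to pin down $\psi_p(\Fil_2^+)$ before treating $\Fil_1^+$; but this is purely a matter of ordering and the content is the same.
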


 \begin{proof}
  Since $\psi_p$ must respect the Hodge filtration, we know that $\Fil_1^+ D_p + \Fil_2^+ D_p$ and $\Fil_1^+ D_p \cap \Fil_2^+ D_p$ map to their analogues in the tensor product. So $\psi_p$ descends to a bijection
  \[ \frac{\Fil_1^+ D_p + \Fil_2^+ D_p}{\Fil_1^+ \cap \Fil_2^+} \to \frac{(\Fil^+ D_{\pp_1} \otimes D_{\pp_2}) + (D_{\pp_1} \otimes \Fil^+ D_{\pp_2})}{\Fil^+ \otimes \Fil^+}. \]

  The orthogonal form $\lambda$ descends to a nondegenerate orthogonal form on $\frac{\Fil_1^+ D_p + \Fil_2^+ D_p}{\Fil_1^+ \cap \Fil_2^+}$. Since a nondegenerate orthogonal form cannot have more than two isotropic lines, it follows that $\Fil_1^+$ and $\Fil_2^+$ are the only two isotropic subspaces intermediate between $\Fil_1^+ + \Fil_2^+$ and $\Fil_1^+ \cap \Fil_2^+$. Since $\psi$ is compatible with the orthogonal forms, it follows that $\psi_p$ must map these spaces to $(\Fil^+ D_{\pp_1} \otimes D_{\pp_2})$ and $(D_{\pp_1} \otimes \Fil^+ D_{\pp_2})$ \emph{in some order}. If $k_1 \ne k_2$, then only one of these lines is contained in the middle Hodge filtration step, so we conclude that $\psi_p$ sends $\Fil_1^+ D_p$ to $\Fil^+ D_{\pp_1} \otimes D_{\pp_2}$, and similarly for $\Fil_2^+ D_p$.
 \end{proof}

 We now consider the Frobenius action, using hypothesis (c). Then the $(\varphi_1 = \alpha_1, \varphi_2 = \alpha_2)$ simultaneous eigenspace of $D_p$ coincides with the  $\varphi = \alpha_1 \alpha_2$ eigenspace. (Containment is clear, and hypothesis (c) implies that $D_p^{\varphi = \alpha_1 \alpha_2}$ has zero intersection with any of the other $(\varphi_1, \varphi_2)$-simultaneous eigenspaces; since the direct sum of these simultaneous eigenspaces is all of $D_p$, we must have equality.) Similarly, the $(\varphi \otimes 1 = \alpha_1, 1 \otimes \varphi = \alpha_2)$ simultaneous eigenspace of $D_{\pp_1} \otimes D_{\pp_2}$ coincides with the $\varphi_1 \otimes \varphi_2 = \alpha_1 \alpha_2$ eigenspace. Since $\psi_p$ commutes with $\varphi$, it must therefore send the $(\varphi_1 = \alpha_1, \varphi_2 = \alpha_2)$ simultaneous eigenspace to the $(\varphi \otimes 1 = \alpha_1, 1 \otimes \varphi = \alpha_2)$ simultaneous eigenspace. Repeating the argument for the other three pairs of roots, we conclude that $\psi_p$ commutes with the partial Frobenii. In particular, if $k_1 \ne k_2$ the proof of \cref{thm:nonBC} is complete.

 In the more delicate $k_1 = k_2$ case, we use \cref{thm:main}. Without loss of generality we suppose $\alpha_1$ has strictly small slope. Then the theorem tells us that $\Fil_2^+ D_p \cap D_p^{(\varphi_1 = \alpha_1)}$ is one-dimensional. However, since $\alpha_1$ has strictly small slope, its valuation is \emph{a fortiori} smaller than $k_1 + t_1 + 1$; so the weak admissibility of $D_{\pp_1}$ implies that $\Fil^+ D_{\pp_1} \cap D_{\pp_1}^{\varphi = \alpha_1}$ must be zero. Thus $(\Fil^+ D_{\pp_1}) \otimes D_{\pp_2}$ has zero intersection with the $\varphi \otimes 1 = \alpha_1$ eigenspace. As $\psi_p$ is compatible with the partial Frobenii, it cannot send $\Fil_2^+ D_p$ to $(\Fil^+ D_{\pp_1}) \otimes D_{\pp_2}$, since the former has nontrivial intersection with the $\varphi_1 = \alpha_1$ eigenspace, while the latter does not. So it must send $\Fil_2^+ D_p$ to $D_{\pp_1} \otimes (\Fil^+ D_{\pp_2})$, and  This completes the proof of \cref{thm:nonBC}.

\subsection{Repeated eigenvalues}

 We now consider, briefly, the contrary situation where (a), (b) of \cref{thm:nonBC} hold, but rather than (c), we suppose that the set of pairwise products has size exactly 3; so without loss of generality we may suppose that $\alpha_1 \beta_2 = \beta_1 \alpha_2$ but there are no other repetitions. Then the same proof as above shows that the $(\alpha_1, \alpha_2)$ and $(\beta_1, \beta_2)$ eigenspaces are sent to their analogues in $D_{\pp_1} \otimes D_{\pp_2}$. Meanwhile, the 2-dimensional $(\varphi = \alpha_1\beta_2 = \beta_1 \alpha_2)$-eigenspace is the sum of the $(\alpha_1, \beta_2)$ and $(\beta_1, \alpha_2)$ simultaneous eigenspaces, which are isotropic lines. So these must be sent to the $(\alpha_1, \beta_2)$ and $(\beta_1, \alpha_2)$ eigenspaces in the tensor product \emph{in some order}. That is, we have two cases:

 \begin{itemize}
  \item Case A: $\psi_p$ commutes with the partial Frobenii.
  \item Case B: $\psi_p$ intertwines $\varphi_1$ and $\varphi_2$ with  $\xi \cdot (1 \otimes \varphi)$ and $\xi^{-1} \cdot (\varphi \otimes 1)$, where $\xi = \tfrac{\alpha_1}{\alpha_2}$.
 \end{itemize}

 (Note that in case B we must have $\xi = \pm 1$, since $\psi_p$ is compatible with the bilinear forms, and $\varphi_1$ scales the bilinear form on $D_p$ by $p^{w + 1}$ while $\xi \cdot (1 \otimes \varphi)$ scales the bilinear form on $D_{\pp_1} \otimes D_{\pp_2}$ by $\xi^2 \cdot p^{w + 1}$.)

 In case A, we can argue exactly as before to show that $\psi_p$ commutes with the plectic structures. In case B, the same argument shows that we must have $k_1 = k_2$, and the isomorphism $\psi_p$ is an ``anti-plectic isomorphism'' (up to a twist), interchanging the roles of the two embeddings.

 \begin{remark}
  As noted above, the cases when $\alpha_1 = \beta_1$ or $\alpha_2 = \beta_2$ are conjectured never to occur. The only other case we have not considered is when $\alpha_1 / \beta_1 = \alpha_2 / \beta_2 = -1$; this definitely can arise, e.g.~if $f$ has complex multiplication by a totally-imaginary quadratic extension of $F$ in which both $\pp_i$ are inert, but it does not seem to be possible to treat it using the above methods.
 \end{remark}

 The above discussion applies, in particular, in the setting of \cref{thm:BC}: the assumptions (a') and (b') imply (a), (b) of \cref{thm:nonBC}, while (c') implies that the set
 \[ \{\alpha_1 \alpha_2,\, \dots,\, \beta_1 \beta_2\} = \{ \alpha_0^2,\, \alpha_0 \beta_0,\, \beta_0^2\}\]
 has three distinct elements. So $\psi_p$ must be either a plectic isomorphism, or a plectic anti-isomorphism. However, in this case $\psi$ is not itself uniquely determined (even up to scalars): since $f^{\sigma} = f$, the Galois representation $V_p(f)$ has an additional order 2 involution compatible with the orthogonal forms, which corresponds to swapping the factors of the tensor product. Replacing $\psi$ with its composite with this involution if necessary, we conclude that there is some isomorphism $\psi$ fulfilling the conditions of \cref{main}, thus proving \cref{thm:BC}.

 \begin{remark}
  If $f$ has $k_1 = k_2$ and $\alpha_1 \beta_2 = \beta_1 \alpha_2$, but $f$ is \emph{not} globally a twist of a base-change form (and is non-CM), then we are stuck. In this case, an isomorphism between $D_p$ and $D_{\pp_1} \otimes D_{\pp_2}$ which is compatible with the $\varphi$-module structure and the orthogonal forms must be either plectic, or anti-plectic (and both cases can occur). However, since $V_p(f)$ is irreducible in this case, only one of the two possibilities will be compatible with the global Galois action, and we cannot rule out the bizarre possibility that the global isomorphisms are the locally anti-plectic ones!
 \end{remark}

\subsection*{Acknowledgements} This paper is dedicated to the memory of our late colleague and friend Jan Nekov\'a\v{r}, whose work has been an inspiration to both of us over more than two decades -- from his undergraduate lectures in Cambridge which originally inspired Sarah to study number theory, to the email exchange with him during September of 2022 which was the starting-point for this paper.

It is also a pleasure to thank George Boxer, Vincent Pilloni, and Tony Scholl for informative conversations in connection with this paper; and the anonymous referees for their valuable comments.

\providecommand{\bysame}{\leavevmode\hbox to3em{\hrulefill}\thinspace}
\renewcommand{\MR}[1]{%
 MR \href{http://www.ams.org/mathscinet-getitem?mr=#1}{#1}.
}
\providecommand{\href}[2]{#2}
\newcommand{\articlehref}[2]{\href{#1}{#2}}

\end{document}